\numberwithin{equation}{section}
\begin{document}

\theoremstyle{remark}
\newtheorem{remark}{Remark}[section]
\newtheorem{theorem}{Theorem}[section]
\newtheorem{corollary}[theorem]{Corollary}
\newtheorem{lemma}[theorem]{Lemma}
\newtheorem{proposition}[theorem]{Proposition}

\title{\textbf{Maximum Principle Preserving Finite Difference Scheme for 1-D Nonlocal-to-Local Diffusion Problems}}
 
\author{
{Amanda Gute\footnote{Department of Mathematics, University of North Carolina at Charlotte, Email: agute@uncc.edu}} \and {Xingjie Helen Li\footnote{Department of Mathematics, University of North Carolina at Charlotte, Email: xli47@uncc.edu}}
}


\maketitle
\setcounter{page}{1}

\begin{abstract}
\noindent
In a recent paper \cite{du2018quasinonlocal}, a quasi-nonlocal coupling method was introduced to seamlessly bridge a nonlocal diffusion model with the classical local diffusion counterpart in a one-dimensional space. The proposed coupling framework removes interfacial inconsistency, preserves the balance of fluxes, and satisfies the maximum principle of diffusion problem. However, the numerical scheme proposed in that paper does not maintain all of these properties on a discrete level. In this paper we resolve this issue by proposing a new finite difference scheme that ensures the balance of fluxes and the discrete maximum principle. We rigorously prove these results and provide the stability and convergence analyses accordingly. In addition, we provide the Courant–Friedrichs–Lewy (CFL) condition for the new scheme and test a series of benchmark examples which confirm the theoretical findings.
\end{abstract}

\noindent
\textbf{Keywords:}
 Nonlocal Diffusion Problem, Quasi-Nonlocal Coupling, Discrete Maximum principle, Convergence Analysis
\section{Introduction}
 Since the last decade, nonlocal integro-differential type models have been employed to describe physical systems, due to their natural ability to model physical phenomena at small scales and their reduced regularity requirements which lead to greater flexibility
\cite{Bates1999,Fife2003,Bobaru2010b,Silling2000,Chasseigne2006a,Du2012a,Du2013a,NewsDu,zhou2010a,Planas2002,Silling2005,Bobaru2010a,Bobaru2011a,Kriventsov,Lipton2014a,Lipton2016a,Silling2008,Lehoucq2010}.
These nonlocal models are defined through a length scale parameter $\delta$, referred to as a horizon, which measures the extent of nonlocal interaction. 
  An important feature of nonlocal models is that they restore the corresponding classical partial differential equation (PDE) models as the horizon $\delta \rightarrow 0$ \cite{Du2012a,Du2013a}.

\vspace{.1in}
\noindent
Nonlocal models that are compatible with the local PDEs are often much computationally expensive and require additional attention to the boundary treatments since a layer of volumetric boundary conditions is needed within the physical system. Meanwhile, nonlocal models need less regularity requirements which helps the descriptions near defects and singularities. Consequently, tremendous efforts have been devoted to combining nonlocal and local methods to keep accuracy around the irregularity while retain efficiency away from the singularity (see the review paper \cite{d2019review} for the state-of-art).

\noindent
In \cite{du2018quasinonlocal},   
a quasi-nonlocal (QNL) coupling method was proposed to combine the nonlocal and local diffusion operators in a seamless way using the variational approach. The coupled operator is proved to preserve many mathematical and physical properties on the continuous level, including the symmetry of operator, the balance of linear momentum, and the maximum principle. However, it is not clear how to retain these desired properties with proper numerical discretization. In this paper, we propose a new finite difference method which inherits all properties from the continuous case. 


\vspace{.1in}
\noindent
We recall that the linear local diffusion model in one-dimensional space is
\begin{equation}\label{LDiffMod}
    u_t(x,t)=u_{xx}(x,t)+f(x,t).
\end{equation}
The corresponding counterpart in the nonlocal setting is the linear nonlocal diffusion model which reads
\begin{equation}\label{NLDiffMod}
    \displaystyle{u_t(x,t)=\int_{-\delta}^{\delta}\gamma_{\delta}(s)\bigg(u(x+s,t)-u(x,t)\bigg)ds},
\end{equation}
where $\gamma_{\delta}(s)$ denotes the isotropic nonlocal diffusion kernel satisfying
the following convenient assumption 
with $\gamma_\delta(\cdot)$ being a rescaled kernel,
\begin{equation}\label{NLKer}
\left \{
\begin{aligned}
& \gamma_\delta(|s|)=\frac{1}{\delta^{3}} \gamma\left(\frac{|s|}{\delta}\right), \quad \gamma \text{ is nonnegative and nonincreasing on (0,1)},\\
& \text{with } \text{supp}(\gamma)\subset [0,1]  \text{ and }  \int_{-\delta}^{\delta} |s|^2 \gamma(|s|) ds =1\,.
\end{aligned}
\right.
\end{equation}
We will display more details about the coupling and numerical schemes in the following sections.

\vspace{.1in}
\noindent
More precisely, We will organize the paper as follows, In section~\ref{sec:QNL_FDM}, we recall the energy-based QNL coupling from \cite{du2018quasinonlocal} to build the coupling operator $\mathcal{L}^{qnl}_{\delta}$ bringing the nonlocal and local diffusion problems and introduce space-time discretizations as well as the new finite difference method (FDM). In section~\ref{sec:consistency}, we estimate the consistency errors of the proposed scheme using Taylor expansions. In section~\ref{sec:Stability}, we prove the discrete maximum principle and hence the stability of proposed scheme. In section~\ref{sec:convergence}, we combine the consistency and stability results to conclude the convergence estimates. In section~\ref{sec:CFL}, we mathematically study the Courant–Friedrichs–Lewy (CFL) condition for the space-time discretization. In section~\ref{sec:numerics}, we test several benchmark examples to confirm our theoretic findings. 






\section{QNL Coupling and Finite Difference Scheme}\label{sec:QNL_FDM}
Now, we consider the domain to be $\Omega_\delta=[-1-\delta,\,1]$, with the coupling interface of nonlocal and local models at $x^*=0$; $(-1,\,0)$ denotes the nonlocal region with nonlocal boundary layer at $[-1-\delta, -1]$ and $(0,\,1)$ denotes the local region with local boundary point at $\{1\}$, as illustrated in Figure~\ref{Fig:1D_domain}.

\begin{figure}[H]
\begin{center}
\includegraphics[height = 1in, width = 4.5in]{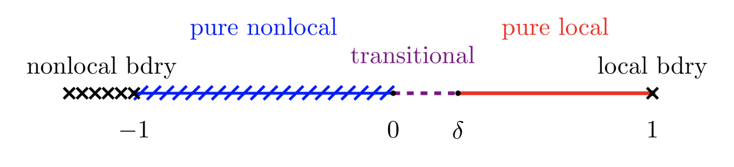}
\caption{Graphical illustration of 1-D Domain.\label{Fig:1D_domain}}
\end{center}
\end{figure}

\noindent 
In \cite{du2018quasinonlocal}, the QNL operator $\mathcal{L}^{qnl}_\delta u(x,t)$ is introduced to smoothly bridge the local and nonlocal regions over the transitional region $[0,\,\delta]$. The corresponding coupled diffusion problem is proved to be a well-posed initial value problem and is given by
\begin{equation}\label{QNL_Cont}
    \begin{cases}
      u_t(x,t)=\mathcal{L}^{qnl}_\delta u(x,t)+f(x,t), & \text{for}\hspace{.1in} T>t>0 \hspace{.1in} \text{and} \hspace{.1in} x\in(-1,1), \\
      u(x,0) = u_0(x), & \text{for}\hspace{.1in} x\in(-1,1), \\
      u(x,t) = 0, & \text{for}\hspace{.1in} x\in[-1-\delta,-1], \hspace{.1in} \text{or} \hspace{.1 in} x=1. \\
   \end{cases}
\end{equation}
$\mathcal{L}^{qnl}_{\delta}$ employed in equation \eqref{QNL_Cont} is the quasi-nonlocal coupling operator which describes the diffusion within the nonlocal, transitional, 
and local regions, respectively. The expression of $\mathcal{L}^{qnl}_{\delta}$ is given below
\begin{equation}\label{QNL_op}
\mathcal{L}^{qnl}_{\delta}u(x,t)= \begin{cases}
      \displaystyle{\int_{-\delta}^{\delta}}\bigg(u(x+s,t)-u(x,t)\bigg)\gamma_\delta(s)ds, \hspace{.2in} \text{if} \hspace{.1in} x\in(-1,0),\\
      \\
      \displaystyle{\int_{x}^{\delta}\gamma_\delta(s)\bigg(u(x-s,t)-u(x,t)\bigg)ds+\bigg{(}\int_{x}^{\delta}s\gamma_{\delta}(s)ds\bigg{)}u_x(x,t)}\\
      \displaystyle{\quad+\bigg{(} \int_{0}^xs^2\gamma_{\delta}(s)+x\int_{x}^\delta s\gamma_{\delta}(s)ds\bigg{)}u_{xx}
      (x)}, \hspace{.2in} \text{if} \hspace{.1in} x\in[0,\delta],\\
      \\
      u_{xx}(x,t), \hspace{.2in}\text{if}\hspace{.1in} x\in(\delta,1).
   \end{cases}
\end{equation}
Next, we discuss the numerical settings for the spatial and temporal discretization. We use $u_i^n$ to denote the numerical approximation of the exact solution $u(x_i, t^n)$ with spatial and temporal step sizes being with $\Delta x:=\frac{1}{N}$ and $\Delta t:=\frac{T}{N_T}$, respectively. Hence, the spatial grid is $x_i$ and temporal grid is $t_n=n\Delta t$. 
For simplicity, we drop $x$ and $t$ but only use $i$ and $n$ accordingly. The relation between $\Delta x$ and $\Delta t$ will be determined later by the CFL condition. 
Meanwhile, we assume that the horizon $\delta$ is a multiple of $\Delta x$ with $\delta=r\Delta x$ and $r\in \mathbb{N}$. 

\vspace{.1in}
\noindent
Recall that the entire computational domain is $\Omega_\delta:=[-1-\delta, \,1]$, so the interior domain is $\Omega=[-1,1]$ with interface at $x^*=0$; the volumetric boundary layer for the nonlocal region is $\Omega_{n}=[-1-\delta,-1)$; and the local boundary point is $\Omega_c=\{1\}$. Next we denote the set of spatial grids by $I$ and ${I}=I_{\Omega}\cup I_{\Omega_{n}}\cup I_{\Omega_{c}}$, where $I_{\Omega}=\{1,2,...,2N-1\}$ denotes the interior grids, $I_{\Omega_{n}}=\{-(r-1),...,0\}$ denotes the nonlocal volumetric boundary grids, and $I_{\Omega_{c}}=\{2N\}$ denotes the local boundary point. 
Following the scope of asymptotically compatible scheme \cite{Tian2013a,Tian2014a}, we define the spatial discretization of the QNL coupling operator $\mathcal{L}_{\delta,\Delta x}^{qnl}$ as follows
\begin{equation}\label{QNL_FDM}
\mathcal{L}_{\delta,\Delta x}^{qnl}u_i^n:= \begin{cases}
      \displaystyle{\sum_{j=1}^r\frac{u_{i+j}^n-2u_i^n+u_{i-j}^n}{(j\Delta x)^2}\int_{(j-1)\Delta x}^{j\Delta x}s^2\gamma_{\delta}(s)ds}, \hspace{.25in} \text{if} \hspace{.1in} x_i<0,\\
      \\
      \displaystyle{\sum_{j=\frac{x_{i}}{\Delta x}+1}^{r}\frac{u_{i+j-1}^n-2u_{i}^n+u_{i-j+1}^n}{2({j-1})\Delta x}\int_{(j-1)\Delta x}^{j\Delta x}s\gamma_{\delta}(s)ds}\\
      \displaystyle{\quad -\sum_{j=\frac{x_{i}}{\Delta x}+1}^{r}\frac{u_{i+j-1}^n-u_{i-j+1}^n}{2({j-1})\Delta x}\int_{(j-1)\Delta x}^{j\Delta x}s\gamma_{\delta}(s)ds}\\
      \displaystyle{\qquad +\bigg{(}\int_{x_{i}}^{\delta}s\gamma_{\delta}(s)ds\bigg{)}\frac{u_{i+1}^n-u_{i}^n}{\Delta x}}\\
      \displaystyle{\quad\qquad +\bigg{(}\int_{0}^{x_{i}}s^2\gamma_{\delta}(s)ds+x_{i}\int_{x_{i}}^{\delta}s\gamma_{\delta}(s)ds\bigg{)}\frac{u_{i+1}^n-2u_{i}^n+u_{i-1}^n}{(\Delta x)^2}},\hspace{.2in}\text{if} \hspace{.1in} x_i\in[0,\delta],\\
      \\
      \displaystyle{\frac{u_{i+1}^n-2u_{i}^n+u_{i-1}^n}{(\Delta x)^2}}, \hspace{1.55in}\text{if}\hspace{.1in} x_i\in(\delta,1).\\
   \end{cases}
\end{equation}
For the temporal discretization, we employ the simplest explicit Euler scheme due to the limitation of first order accuracy in the spatial discrezation, which will be proved later. Hence the full FDM discretization of \eqref{QNL_Cont} is \begin{equation}\label{QNL_scheme}
\frac{u_i^{n+1}-u_i^n}{\Delta t}=
\mathcal{L}_{\delta,\Delta x}^{qnl}u_i^n+f^n_i,\quad i\in I_{\Omega},
\end{equation}
where $f^n_i=f(x_i,t^n)$.


\vspace{.1in}
\noindent
Figure~\ref{Fig:stencil} displays a sampling set of spatial stencils using $N=5$ on domain $[-1-\delta,1]$.  The step size is $\Delta x = \frac{1}{5}$ and the horizon $\delta =r\Delta x$ with $r=3$. 
\begin{figure}[H]
\begin{center}
 \includegraphics[height = 1.1in, width = 5.25in]{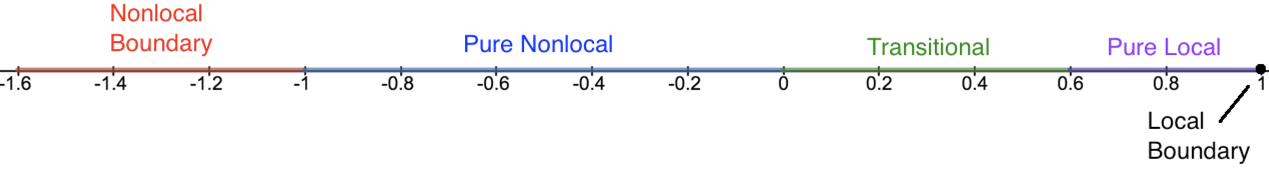}
\caption{Illustration of the finite difference stencil.\label{Fig:stencil}}
\end{center}
\end{figure}

\section{Consistency}\label{sec:consistency}
In this section, we estimate the consistency error of the scheme \eqref{QNL_scheme} with $\mathcal{L}^{qnl}_{\delta, \Delta x}$ defined in \eqref{QNL_FDM}.



\begin{theorem}\label{theorem:consistency}
Let the horizon $\delta=r\Delta x$ with $r\in\mathbb{N}$ and suppose $u(x,t)$ is the strong solution to \eqref{QNL_Cont}, and $u_i^n$ is the discrete solution to the scheme \eqref{QNL_scheme} with
$i\in I_{\Omega}$ and $t^n=n\Delta t$. Also assume that the exact solution $u$ is sufficiently smooth, specifically
$u(x,t)\in C^{2}([-1-\delta, 1]\times [0, T])$. 
Suppose at any given time level $t^n=n \Delta t$ we have 
$u(x_i,t^n)=u_i^n,\; \forall i\in I_{\Omega}=\{1,\dots, 2N-1\}$,
then for the next time level $n+1$
the consistency error of the scheme satisfies
\begin{equation}\label{consistency_error}
    |u_i^{n+1}-u(x_i,t^{n+1})|\le C_{\delta}\Delta t\left( (\Delta x)+(\Delta t)\right),\; \forall i=1,\dots, 2N-1,
\end{equation}
where $C_{\delta}$ is a constant independent of $\Delta x$ and $\Delta t$.
\end{theorem}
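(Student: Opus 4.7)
The plan is to decompose the one-step error into a temporal Taylor-remainder piece and a spatial truncation piece and estimate each separately. Starting from the exact equation $u_t(x_i,t^n)=\mathcal{L}^{qnl}_\delta u(x_i,t^n)+f(x_i,t^n)$ and the scheme $u_i^{n+1}=u_i^n+\Delta t\bigl(\mathcal{L}^{qnl}_{\delta,\Delta x}u_i^n+f_i^n\bigr)$, I would Taylor-expand $u(x_i,t^{n+1})$ in time and use the assumption $u_i^n=u(x_i,t^n)$ to obtain
\[
u_i^{n+1}-u(x_i,t^{n+1})=\Delta t\,\bigl[\mathcal{L}^{qnl}_{\delta,\Delta x}u(\cdot,t^n)\big|_{x_i}-\mathcal{L}^{qnl}_\delta u(x_i,t^n)\bigr]-\tfrac{(\Delta t)^2}{2}u_{tt}(x_i,\xi_n),
\]
for some $\xi_n\in(t^n,t^{n+1})$. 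Since $u_{tt}$ can be controlled via the PDE by trading the time derivative for the spatial operator applied to $u$, the temporal remainder already contributes $O((\Delta t)^2)$. It therefore suffices to show that the spatial truncation error $\tau_i:=\mathcal{L}^{qnl}_{\delta,\Delta x}u(\cdot,t^n)\big|_{x_i}-\mathcal{L}^{qnl}_\delta u(x_i,t^n)$ satisfies $|\tau_i|\le C_\delta\,\Delta x$ uniformly in $i\in I_\Omega$.

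To bound $\tau_i$ I would proceed region by region along the three branches of the piecewise operator \eqref{QNL_op}--\eqref{QNL_FDM}. In the purely local region $x_i\in(\delta,1)$ this is the classical central-difference truncation, giving at worst $O(\Delta x)$ under the smoothness hypothesis. In the purely nonlocal region $x_i<0$ I would Taylor-expand $u(x_i\pm j\Delta x)$ about $x_i$ and compare the quadrature $\sum_{j=1}^{r}\tfrac{u_{i+j}-2u_i+u_{i-j}}{(j\Delta x)^2}\int_{(j-1)\Delta x}^{j\Delta x}s^2\gamma_\delta(s)\,ds$ with the exact integral $\int_{-\delta}^{\delta}(u(x_i+s)-u(x_i))\gamma_\delta(s)\,ds$; the symmetric second-difference form cancels the odd Taylor terms, and the midpoint-type quadrature against $s^2\gamma_\delta(s)$ yields $O(\Delta x)$ with constants that depend on $\delta$ through the normalization $\gamma_\delta(\cdot)=\delta^{-3}\gamma(\cdot/\delta)$, absorbed into $C_\delta$. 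In the transitional region $x_i\in[0,\delta]$ the scheme assembles three distinct pieces: the summation of weighted second-differences approximates $\int_{x}^{\delta}(u(x-s)-u(x))\gamma_\delta(s)\,ds$, the forward difference times $\int_{x}^{\delta}s\gamma_\delta(s)\,ds$ approximates the $u_x$ term, and the standard central second-difference times $\int_{0}^{x}s^2\gamma_\delta(s)\,ds+x\int_{x}^{\delta}s\gamma_\delta(s)\,ds$ approximates the $u_{xx}$ term. Each of the three pieces is estimated by Taylor expansion of the nodal data about $x_i$, with the $\gamma_\delta$-moments treated as $\delta$-dependent constants.

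Adding the three cases gives $|\tau_i|\le C_\delta\,\Delta x$ with $C_\delta$ depending on a $C^2$-norm of $u$ and on kernel moments, but not on $\Delta x$ or $\Delta t$. Multiplying by $\Delta t$ and combining with the temporal remainder produces the claimed bound $|u_i^{n+1}-u(x_i,t^{n+1})|\le C_\delta\,\Delta t\,(\Delta x+\Delta t)$. I expect the main obstacle to be the transitional branch $x_i\in[0,\delta]$: the summation index $j$ ranges over $\tfrac{x_i}{\Delta x}+1,\dots,r$, the denominators $(j-1)\Delta x$ must be tracked against the kernel weights, and one has to verify that the three sub-approximations combine without cancellation loss so that the total truncation error remains $O(\Delta x)$ uniformly in $i$. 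Particular care is needed at the interface $x_i=0$ and at $x_i=\delta$, where the branches meet, in order that a single constant $C_\delta$ controls $\tau_i$ over all of $I_\Omega$.
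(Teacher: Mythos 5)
Your proposal is correct and follows essentially the same route as the paper: a temporal Taylor expansion combined with the exact equation yields the $O((\Delta t)^2)$ remainder, and the spatial truncation error $\mathcal{L}^{qnl}_{\delta,\Delta x}u-\mathcal{L}^{qnl}_{\delta}u$ is bounded region by region (local, nonlocal, transitional) via Taylor expansion of the nodal values against the kernel moments, with the transitional branch supplying the limiting $O(\Delta x)$ term. Your factoring of the one-step error into $\Delta t\,\tau_i$ plus the time remainder is just a cleaner bookkeeping of the computation the paper carries out by expanding $u_i^{n+1}$ and $u(x_i,t^{n+1})$ separately and subtracting.
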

\begin{proof}
We evolve $u(x_i, t^n)$ and $u^n_i$ by one time step $\Delta t$ according to three differential regions.

\vspace{.25cm}
\noindent \textbf{Local:} If $x_i>\delta$ or simply $i\in \{{N+r+1},...,{2N-1}\}$, then the continuous and discrete equations follow the expressions in the local region. So at $(x_i, t^n)$, we have 
the continuous equation: 
\begin{equation}\label{ContLEqn}
    u_t(x_i,t_n)=u_{xx}(x_i,t_n) +f(x_i,t_n),
\end{equation}
and the discrete equation: 
\begin{equation}\label{DiscLEqn}
    \frac{u_i^{n+1}-u_i^n}{\Delta t}=\frac{u_{i+1}^n-2u_i^n+u_{i-1}^n}{(\Delta x)^2}+f_i^n
\end{equation}
with $f_i^n=f(x_i, t^n)$.

\vspace{.25cm}
\noindent
Notice from consistency assumption that $u_i^n=u(x_i, t^n)$, so can rewrite the discrete equation as
\begin{equation}\label{DiscLEqn_2}
\frac{u_i^{n+1}-u(x_i, t^n)}{\Delta t}=\frac{u(x_{i+1}, t^n)-2u(x_i, t^n)+u(x_{i-1}, t^n)}{(\Delta x)^2}+f(x_i,t^n).
\end{equation}

\noindent
We apply the Taylor expansion at the spatial grid $(x_i)$ up to fourth order derivative and get an estimate of $u_i^{n+1}$, which is
\begin{align}\label{disc_local}
u_i^{n+1}=&
u(x_i, t^n)+\Delta t\left(\frac{u(x_{i+1}, t^n)-2u(x_i, t^n)+u(x_{i-1}, t^n)}{(\Delta x)^2}+f(x_i,t^n)\right)\nonumber\\
=&u(x_i, t^n)+\Delta t\left(\frac{(\Delta x)^2u_{xx}(x_i, t^n)+O(\Delta x^4)}{(\Delta x)^2}+f(x_i,t^n)\right)\nonumber\\
=&u(x_i, t^n)+\Delta t\bigg(u_{xx}(x_i, t^n)+f(x_i,t^n)\bigg)+O\big(\Delta t(\Delta x)^2\big).
\end{align}
Now, let us estimate the continuous solution $u(x_i, t^{n+1})$. This time, we apply Taylor expansion at the time grid $(t^n)$ and get
\begin{align}\label{cont_local}
u(x_i,t^{n+1})=&
u(x_i,t^n)+\Delta t u_{t}(x_i,t^n)+O(\Delta t^2)\nonumber\\
=&u(x_i,t^n)+\Delta t \bigg[\big(u_{xx}(x_i, t^n)+f(x_i,t^n)\big)\bigg]+O(\Delta t^2),
\end{align}
where we substitute $u_{t}(x_i,t^n)$ by the continuous equation on the local region.

\noindent
By subtracting \eqref{disc_local} from \eqref{cont_local} we can get
\begin{equation}\label{LocD_Cerror}
u_i^{n+1}-u(x_i,t^{n+1})=O\big(\Delta t (\Delta x)^2\big)+O\big((\Delta t)^2\big).
\end{equation}

\noindent
\textbf{Nonlocal:} Next we consider the fully nonlocal region where $x_i<0$ or simply $i\in \{1,\dots, N\}$. We first have the continuous equation: 
\begin{align}\label{NLconteqn}
u_t(x_i&,t^n)=\int_{-\delta}^{\delta}\gamma_{\delta}(s)\bigg(u(x_i+s,t^n)-u(x_i,t^n)\bigg)ds+f(x_i,t^n)\nonumber\\
=&\int_{-\delta}^0\gamma_{\delta}(s)\bigg{(}u(x_i+s,t^n)-u(x_i,t^n)\bigg{)}ds+\int_0^{\delta}\gamma_{\delta}(s)\bigg{(}u(x_i+s,t^n)-u(x_i,t^n)\bigg{)}ds+f(x_i,t^n)\nonumber\\
=&\int_0^{\delta}\gamma_{\delta}(-s)\bigg{(}u(x_i-s,t^n)-u(x_i,t^n)\bigg{)}ds+\int_0^{\delta}\gamma_{\delta}(s)\bigg{(}u(x_i+s,t^n)-u(x_i,t^n)\bigg{)}ds +f(x_i,t^n).
\end{align}

\vspace{.25cm}
\noindent
Because of the isotropic property of the nonlocal kernel $\gamma_{\delta}(s)$ summarized in \eqref{NLKer},
we have 
\begin{align}\label{NLconteqn_2}
u_{t}(x_i,t^n)=\int_0^{\delta}\gamma_{\delta}(s)\bigg{(}u(x_i+s,t^n)-2u(x_i,t^n)+u(x_i-s,t^n)\bigg{)}ds+f(x_i,t^n).
\end{align}

\noindent
Clearly, we can divide the integral into the sum of subintegrals on the union of subintervals, so we have, 
\begin{align}\label{NLconteqn_3}
u_t(x_i,t^n)=\sum_{j=1}^r\int_{(j-1)\Delta x}^{j\Delta x}\gamma_{\delta}(s)\bigg{(}u(x_i+s,t^n)-2u(x_i,t^n)+u(x_i-s,t^n)\bigg{)}ds+f(x_i,t^n).
\end{align}

\vspace{.25cm}
\noindent
Meanwhile, we have the discrete equation to advance $u_i^n$ to $u_{i}^{n+1}$:
\begin{equation}\label{discNLeqn}
\frac{u_i^{n+1}-u_i^n}{\Delta t}=\sum_{j=1}^r\frac{u_{i+j}^n-2u_i^n+u_{i-j}^n}{(j\Delta x)^2}\int_{(j-1)\Delta x}^{j\Delta x}s^2\gamma_{\delta}(s)ds+f_i^n.
\end{equation}

\noindent
Which gives,
\begin{equation}\label{discNLeqn2}
u_i^{n+1}=u_i^n+\Delta t\bigg(\sum_{j=1}^r\frac{u_{i+j}^n-2u_i^n+u_{i-j}^n}{(j\Delta x)^2}\int_{(j-1)\Delta x}^{j\Delta x}s^2\gamma_{\delta}(s)ds+f_i^n\bigg).    
\end{equation}

\noindent
Now we want to estimate the continuous solution $u(x_i,t^{n+1})$. We know that
\begin{equation}\label{contNL_00}
u(x_i,t^{n+1})=u(x_i,t^n)+\Delta t u_t(x_i,t^n)+O(\Delta t^2),
\end{equation}

\noindent
Hence, plugging the continuous description of nonlocal diffusion \eqref{NLconteqn_3}, we get
\begin{align}\label{contNL}
u(x_i,t^{n+1})&=
u(x_i,t^n)+\Delta t u_{t}(x_i,t^n)+O(\Delta t^2)\nonumber\\
&=u(x_i,t^n)+\Delta t\bigg[\sum_{j=1}^r\int_{(j-1)\Delta x}^{j\Delta x}\gamma_{\delta}(s)s^2\bigg{(}\frac{u(x_i+s,t^n)-2u(x_i,t^n)+u(x_i-s,t^n)}{s^2}\bigg{)}ds\nonumber\\
&\qquad +f(x_i,t^n)\bigg]+O(\Delta t^2).
\end{align}

\vspace{.25in}
\noindent
For each integral term from $[(j-1)\Delta x, \, j\Delta x]$ within the summation , we then focus on the fractional term and apply Taylor expand to $u(x_i+s,t^n)$ and $u(x_i-s,t^n)$ for $s$ at $(j\Delta x)$ up to fourth order derivative.
This gives an estimate of
\begin{align}\label{cont_nonlocal}
u(x_i,t^{n+1})&= u(x_i,t^{n})\nonumber\\
&+{\Delta t}\Bigg[\sum_{j=1}^r\int_{(j-1)\Delta x}^{j\Delta x}\gamma_{\delta}(s) s^2 \frac{1}{(j\Delta x)^2}\bigg( \big(u(x_{i+j},t^n)-2u(x_{i},t^n)+u(x_{i-j},t^n)\big)+O(s^4)\bigg) ds\nonumber\\
&\qquad\qquad\qquad  +f(x_i,t^n)\Bigg]+O(\Delta t^2)\nonumber\\
&=u_i^n
+{\Delta t}\Bigg[\sum_{j=1}^r\int_{(j-1)\Delta x}^{j\Delta x}\gamma_{\delta}(s) s^2 \frac{1}{(j\Delta x)^2}\bigg( \big(u_{i+j}^n-2u_{i}^n+u_{i-j}^n\big)\bigg) ds +O(\Delta x^2)\nonumber\\
&\qquad\qquad\qquad  +f(x_i,t^n)\Bigg]+O(\Delta t^2).
\end{align}


\noindent
Then by subtracting \eqref{discNLeqn2} from \eqref{cont_nonlocal}, we can get

\begin{equation}\label{NLerror}
u_i^{n+1}-u(x_i,t^{n+1})=O(\Delta t)\cdot O(\Delta x)^2+O(\Delta t^2).
\end{equation}

\vspace{.25cm}
\noindent
\textbf{Transitional:} Finally we consider when  $x_i\in[0,\, \delta]$ or equivalently $i\in\{{N+1},\dots,{N+r}\}$, and again we will look at the continuous equation for the time derivative $u_t(x_i,t^n)$ first.
\begin{align}\label{ContTEqn}
u_t(x_i,t^n)&= \bigg{[}\int_{x_i}^{\delta}\gamma_{\delta}(s)\bigg(u(x_i-s,t^n)-u(x_i,t^n)\bigg)ds+\Bigg{(}\int_{x_i}^{\delta}s\gamma_{\delta}(s)ds\Bigg{)}u_x(x_i,t^n) \nonumber\\
&+\Bigg{(}\int_{0}^{x_i}s^2\gamma_{\delta}(s)ds+x_i\int_{x_i}^{\delta}s\gamma_{\delta}(s)ds\Bigg{)}u_{xx}(x_{i},t^n)\bigg{]}+f(x_i,t^n),
\end{align}

\noindent
and splitting and symmetrizing the first integral gives
\begin{align}\label{ContTEqn2}
u_t(x_i,t^{n})&=\int_{x_i}^{\delta}\frac{\gamma_{\delta}(s)}{2}\bigg{(}u(x_i-s,t^n)-2u(x_i,t^n)+u(x_i+s,t^n)\bigg{)}ds\nonumber\\
&+\int_{x_i}^{\delta}\frac{\gamma_{\delta}(s)}{2}\bigg{(}u(x_i-s,t^n)-u(x_i+s,t^n)\bigg{)}ds+\Bigg{(}\int_{x_i}^{\delta}s\gamma_{\delta}(s)ds\Bigg{)}u_x(x_i,t^n)\nonumber\\
&+\Bigg{(}\int_{0}^{x_i}s^2\gamma_{\delta}(s)ds+x_i\int_{x_i}^{\delta}s\gamma_{\delta}(s)ds\Bigg{)}u_{xx}(x_{i},t^n)+f(x_i,t^n), 
\end{align}

\noindent
and dividing these two integrals into the sum of subintegrals on the union of subintervals, and modify each integrand in the scope of asymptotically compatible scheme \cite{Tian2014a}, we get
\begin{align}\label{ContTEqn3}
u_t(x_i,t^{n})&=\sum_{j=\frac{x_{i}}{\Delta x}+1}^{r}\int_{(j-1)\Delta x}^{j\Delta x}\frac{\gamma_{\delta}(s)s}{2}\bigg{(}
\frac{
u(x_i-s,t^n)-2u(x_i,t^n)+u(x_i+s,t^n)}{s}\bigg{)}ds\nonumber\\
&+\sum_{j=\frac{x_{i}}{\Delta x}+1}^{r}\int_{(j-1)\Delta x}^{j\Delta x}\frac{\gamma_{\delta}(s) s}{2}\bigg{(}\frac{u(x_i-s,t^n)-u(x_i+s,t^n)}{s}\bigg{)}ds+\Bigg{(}\int_{x_i}^{\delta}s\gamma_{\delta}(s)ds\Bigg{)}u_x(x_i,t^n)\nonumber\\
&+\Bigg{(}\int_{0}^{x_i}s^2\gamma_{\delta}(s)ds+x_i\int_{x_i}^{\delta}s\gamma_{\delta}(s)ds\Bigg{)}u_{xx}(x_{i},t^n)+f(x_i,t^n).  
\end{align}



\vspace{.25cm}
\noindent
Now working with the discrete equation for $u_i^{n+1}$
\begin{align}\label{DiscTEqn}
\frac{u_i^{n+1}-u_i^n}{\Delta t}&=\sum_{j=\frac{x_{i}}{\Delta x}+1}^{r}\frac{u_{i+j-1}^n-2u_{i}^n+u_{i-j+1}^n}{2({j-1})\Delta x}\int_{(j-1)\Delta x}^{j\Delta x}s\gamma_{\delta}(s)ds \nonumber\\
&-\sum_{j=\frac{x_{i}}{\Delta x}+1}^{r}\frac{u_{i+j-1}^n-u_{i-j+1}^n}{2({j-1})\Delta x}\int_{(j-1)\Delta x}^{j\Delta x}s\gamma_{\delta}(s)ds+\bigg{(}\int_{x_{i}}^{\delta}s\gamma_{\delta}(s)ds\bigg{)}\frac{u_{i+1}^n-u_{i}^n}{\Delta x}\nonumber\\
&+\bigg{(}\int_{0}^{x_{i}}s^2\gamma_{\delta}(s)ds+x_{i}\int_{x_{i}}^{\delta}s\gamma_{\delta}(s)ds\bigg{)}\frac{u_{i+1}^n-2u_{i}^n+u_{i-1}^n}{(\Delta x)^2}+f_i^n.
\end{align}

\noindent
Which gives,
\begin{align}\label{DiscTEqn1}
u_i^{n+1}&=u_i^n+\Delta t\bigg[\sum_{j=\frac{x_{i}}{\Delta x}+1}^{r}\frac{u_{i+j-1}^n-2u_{i}^n+u_{i-j+1}^n}{2({j-1})\Delta x}\int_{(j-1)\Delta x}^{j\Delta x}s\gamma_{\delta}(s)ds \nonumber\\
&-\sum_{j=\frac{x_{i}}{\Delta x}+1}^{r}\frac{u_{i+j-1}^n-u_{i-j+1}^n}{2({j-1})\Delta x}\int_{(j-1)\Delta x}^{j\Delta x}s\gamma_{\delta}(s)ds+\bigg{(}\int_{x_{i}}^{\delta}s\gamma_{\delta}(s)ds\bigg{)}\frac{u_{i+1}^n-u_{i}^n}{\Delta x}\nonumber\\
&+\bigg{(}\int_{0}^{x_{i}}s^2\gamma_{\delta}(s)ds+x_{i}\int_{x_{i}}^{\delta}s\gamma_{\delta}(s)ds\bigg{)}\frac{u_{i+1}^n-2u_{i}^n+u_{i-1}^n}{(\Delta x)^2}+f_i^n\bigg].
\end{align}

\noindent
Again we want to estimate difference between $u(x_i,t^{n+1})$ and $u_i^{n+1}$. 


\vspace{.1in}
\noindent
For each integral term $[(j-1)\Delta x, j\Delta x]$ within the summation of \eqref{ContTEqn3}, we then Taylor expand $u(x_i+s,t^n)$ and $u(x_i-s,t^n)$ for $s$ at $(j-1)\Delta x$, which is similar to the processing we did for the nonlocal region.

\begin{align}\label{ContT1}
u(x_i,&t^{n+1})= u(x_i,t^{n})\nonumber\\
&+{\Delta t}\Bigg[\sum_{j=\frac{x_{i}}{\Delta x}+1}^r\int_{(j-1)\Delta x}^{j\Delta x} \frac{\gamma_{\delta}(s)s}{2(j-1)\Delta x}\bigg( u(x_{i+j-1},t^n)-2u(x_{i},t^n)+u(x_{i-j+1},t^n)+O(s^2)\bigg) ds\nonumber\\
&+\sum_{j=\frac{x_{i}}{\Delta x}+1}^{r}\int_{(j-1)\Delta x}^{j\Delta x}\frac{\gamma_{\delta}(s)s}{2(j-1)\Delta x}\bigg{(}u(x_{i+j-1},t^n)-u(x_{i-j+1},t^n)+O(s)\bigg{)}ds\nonumber\\
&+\Bigg{(}\int_{x_i}^{\delta}s\gamma_{\delta}(s)ds\Bigg{)}\left(\frac{u(x_{i+1},t^n)-(x_i,t^n)}{\Delta x}+O(\Delta x)\right)\nonumber\\
&+\Bigg{(}\int_{0}^{x_i}s^2\gamma_{\delta}(s)ds+x_i\int_{x_i}^{\delta}s\gamma_{\delta}(s)ds\Bigg{)}\left(\frac{u_(x_{i+1},t^n)-2_(x_{i},t^n)+_(x_{i-1},t^n)}{\Delta x^2}+O(\Delta x^2)\right)\bigg.
\\&\qquad \qquad +f(x_i,t^n)\bigg]+O(\Delta t^2).
\end{align}

\noindent
By subtracting \eqref{DiscTEqn1} from \eqref{ContT1}  we can get
\begin{align}\label{TError}
u_i^{n+1}-u(x_i,t^{n+1})=O(\Delta t) O(\Delta x)+O(\Delta t^2).
\end{align}

\noindent Therefore, $\|u(x_i,t^{n+1})-u_i^{n+1}\|_{L^\infty}=
O(\Delta t)O(\Delta x)+O(\Delta t^2)$ with highest restrictions from the transitional region. Since the order of accuracy is greater than zero, the finite difference scheme is consistent.
\end{proof}

\section{Stability} \label{sec:Stability}
Global stability of the scheme is attained by the discrete maximum principle.  To prove the discrete maximum principle for the quasi-nonlocal coupling equation with an underlying finite difference discretization the spatial operator $(-\mathcal{L}_{\delta,\Delta x}^{qnl})$ must be positive-definite, and the time discretization, that is a single explicit Euler, must be a convex scheme. Recall the interior domain $\Omega=[-1,1]$ with interface at $x^*=0$.  The volumetric boundary layer for the nonlocal region is $\Omega_{n}=(-1-\delta,-1]$, and the local boundary point is $\Omega_c=\{1\}$. The corresponding sets of spatial grids are $I_{\Omega}=\{1,2,...,2N-1\}$ for $\Omega$, $I_{\Omega_{n}}=\{-(r-1),...,0\}$ for $\Omega_n$, and $I_{\Omega_{c}}=\{2N\}$ for $\Omega_{c}$. Let 
${I}=I_{\Omega}\cup I_{\Omega_{n}}\cup I_{\Omega_{c}}$ denote the union of total stencils within the entire domain (Interior and Boundary), and ${I}_B=I_{\Omega_{n}}\cup I_{\Omega_{c}}$ denote the stencils within the boundary regions $\Omega_n\cup\Omega_c$ (Boundary).

\vspace{.1in}
\noindent
Next we will firstly prove the positive-definiteness of $(-\mathcal{L}_{\delta,\Delta x}^{qnl})$ in Theorem~\ref{thm_DMP_static}, which is the discrete maximum principle for the static case; and then extend the result to the dynamic case in Theorem~\ref{thm_DMP_dynamic} where time derivative is involved.

\vspace{.1in}
\begin{theorem}\label{thm_DMP_static}
\textbf{Discrete Maximum Principle for the Static Case} The discrete operator $\mathcal{L}_{\delta,\Delta x}^{qnl}$ satisfies the maximum principle. For $u(x_i) \in \ell^1({I})$ with $\big(-\mathcal{L}_{\delta,\Delta x}^{qnl}\big)\left(u(x_j)\right)\leq0$ and $j\in I_{\Omega}$, and for any $i\in I=I_{\Omega}\cup I_{B}$, we have 
\begin{equation}\label{MaxP}
\max\limits_{i\in {I}}u(x_i)\leq \max\limits_{i\in {I}_B}u(x_i).
\end{equation}
Furthermore, equality holds, and $u(x_i)$ is a constant function on stencils $I$.
\end{theorem}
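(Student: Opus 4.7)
The plan is to recast $\mathcal{L}_{\delta,\Delta x}^{qnl}$ in the monotone ``positive off-diagonal, zero row-sum'' form
\[
\mathcal{L}_{\delta,\Delta x}^{qnl} u(x_i) \;=\; \sum_{j\in I,\,j\neq i} a_{ij}\bigl(u(x_j)-u(x_i)\bigr),\qquad a_{ij}\geq 0,
\]
after which both the weak and strong discrete maximum principles follow from a standard argument. I would verify this representation region by region, following the three cases of \eqref{QNL_FDM}.

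For $x_i>\delta$ (local), the identity $u_{i+1}-2u_i+u_{i-1}=(u_{i+1}-u_i)+(u_{i-1}-u_i)$ yields the required form with $a_{i,i\pm 1}=1/(\Delta x)^2>0$. For $x_i<0$ (nonlocal), each summand $\tfrac{u_{i+j}-2u_i+u_{i-j}}{(j\Delta x)^2}$ splits identically, and the weights $\tfrac{1}{(j\Delta x)^2}\int_{(j-1)\Delta x}^{j\Delta x}s^2\gamma_{\delta}(s)\,ds$ are non-negative by the sign assumption in \eqref{NLKer}.

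The main obstacle is the transitional case $x_i\in[0,\delta]$, where four heterogeneous pieces appear. The clean way forward is to combine the first and second sums in \eqref{QNL_FDM}: the antisymmetric part carried by $u_{i+j-1}-u_{i-j+1}$ cancels exactly, leaving the manifestly sign-correct expression
\[
\sum_{j=\frac{x_i}{\Delta x}+1}^{r}\frac{u_{i-j+1}-u_i}{(j-1)\Delta x}\int_{(j-1)\Delta x}^{j\Delta x}s\gamma_{\delta}(s)\,ds.
\]
The third term is already of the form $c\,(u_{i+1}-u_i)$ with $c=\tfrac{1}{\Delta x}\int_{x_i}^{\delta}s\gamma_{\delta}(s)\,ds\geq 0$, and the fourth term splits like the local Laplacian with the non-negative prefactor $\int_0^{x_i}s^2\gamma_{\delta}(s)\,ds+x_i\int_{x_i}^{\delta}s\gamma_{\delta}(s)\,ds$. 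Assembling these four pieces completes the representation.

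Granted this representation, suppose $M:=\max_{i\in I}u(x_i)$ is attained at some $i_0\in I_\Omega$. The hypothesis $-\mathcal{L}_{\delta,\Delta x}^{qnl}u(x_{i_0})\leq 0$ and the representation together force
\[
0\;\leq\; \sum_{j\neq i_0}a_{i_0 j}\bigl(u(x_j)-u(x_{i_0})\bigr)\;\leq\; 0,
\]
so every summand vanishes and $u(x_j)=M$ for each $j$ with $a_{i_0 j}>0$. Iterating along the connected stencil graph propagates the value $M$ throughout $I$, in particular reaching $I_B$ and yielding $M\leq \max_{i\in I_B}u(x_i)$; the same propagation delivers the strong form, namely that $u$ is constant on $I$ whenever equality is attained at an interior node. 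The one remaining subtlety is verifying connectedness of the stencil graph across the interface $x^*=0$, which follows from the fact that the transitional-region stencil contains both nearest neighbors $i\pm 1$ (through the fourth term) and the cross-interface neighbors $i-j+1$ for $j$ up to $r$ (through the combined first two terms), while the nonlocal stencil already links $r$ points on either side of any nonlocal node.
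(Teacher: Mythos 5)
Your proposal is correct, and it reaches the same essential mechanism as the paper's proof but packages it more systematically. The paper argues by contradiction at a \emph{strict} interior maximum and checks the sign of $\mathcal{L}_{\delta,\Delta x}^{qnl}u(x_{j^*})$ region by region; in the transitional case it first passes through the intermediate inequality $\int_{(k-1)\Delta x}^{k\Delta x}s^2\gamma_{\delta}(s)\,ds \geq (k-1)\Delta x\int_{(k-1)\Delta x}^{k\Delta x}s\gamma_{\delta}(s)\,ds$ before arriving at exactly the expression you obtain directly, namely $\sum_{j}\frac{u_{i-j+1}-u_i}{(j-1)\Delta x}\int_{(j-1)\Delta x}^{j\Delta x}s\gamma_{\delta}(s)\,ds$ plus the two manifestly sign-correct local terms. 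Working straight from the definition \eqref{QNL_FDM}, your observation that the antisymmetric part cancels \emph{exactly} against the second sum is valid, and your ``nonnegative off-diagonal, zero row-sum'' representation plus propagation along the connected stencil graph buys two things the paper's argument does not deliver cleanly: it handles a non-strict interior maximum (the paper's contradiction only rules out a strictly attained one), and it yields the strong form of the principle (constancy of $u$ on $I$ when equality holds) as an automatic consequence of connectivity rather than as an unproved addendum. Two small points you should still pin down: the term with $j=\frac{x_i}{\Delta x}+1$ at $x_i=0$ has $(j-1)=0$ in the denominator (a degenerate $0/0$ term shared with the paper's formula, which should be excluded or interpreted as zero), and the propagation step needs the weights $a_{i,i\pm1}$ to be strictly positive, which follows from the kernel assumption \eqref{NLKer} (since $\gamma$ is nonnegative, nonincreasing, and not identically zero, it is positive near the origin, so $\int_0^{\Delta x}s^2\gamma_{\delta}(s)\,ds>0$ and the nearest-neighbor couplings never vanish).
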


\begin{proof}
Suppose the discrete function $u$ achieves its strictly maximum values at an interior grid $j^*\in I_{\Omega}$.

\vspace{.25cm}
\noindent
\textbf{Case I Nonlocal}: Consider $j^*\in\{1,2,...,N\}$. Then since $u(x_{j^*})$ is a strict maximum
\begin{equation}\label{NLMaxP}
\mathcal{L}_{\delta,\Delta x}^{qnl}u_h(x_{j^*})=\sum_{k=1}^r\frac{u(x_{j^*+k})-2u(x_{j^*})+u(x_{j^*-k})}{(k\Delta x)^2}\int_{(k-1)\Delta x}^{k\Delta x}s^2\gamma_{\delta}(s)ds< 0
\end{equation}
which contradicts $-\mathcal{L}_{\delta,\Delta x}^{qnl}u(x_j^{*})\leq0$ unless $u$ is constant.

\vspace{.25cm}
\noindent
\textbf{Case II Transitional}:  Consider $j^* \in \{N+1,N+2,...,N+r\}$. We observe that 
\begin{equation}\label{TMaxP}
\int_{(k-1)\Delta x}^{k\Delta x}s^2\gamma_{\delta}(s)ds>(k-1)\Delta x\int_{(k-1)\Delta x}^{k\Delta x}s\gamma_{\delta}(s)ds.
\end{equation}
Using $u(x_{j^*})$ 
\begin{align}\label{TMaxP_2}
\mathcal{L}_{\delta,\Delta x}^{qnl}u_h(x_{j^*})&=\sum_{k=\frac{x_{j^*}}{\Delta x}+1}^r\frac{u(x_{j^*+k-1})-2u(x_{j^*})+u(x_{j^*-k+1})}{2(k-1)^2(\Delta x)^2}\int_{(k-1)\Delta x}^{k\Delta x}s^2\gamma_{\delta}(s)ds\nonumber\\
&-\sum_{k=\frac{x_{j^*}}{\Delta x}+1}^r\frac{u(x_{j^*+k-1})-u(x_{j^*-k+1})}{2(k-1)\Delta x}\int_{(k-1)\Delta x}^{k\Delta x}s\gamma_{\delta}(s)ds\nonumber\\
&+{\bigg(\int_{x_{j^*}}^{\delta}s\gamma_{\delta}(s)ds\bigg)\frac{u(x_{j^*+1})-u(x_{j^*})}{\Delta x}}\nonumber\\
&+\bigg(\int_0^{x_j^*}s^2\gamma_{\delta}(s)ds+x_{j^*}\int_{x_{j^*}}^{\delta}s\gamma_{\delta}(s)ds\bigg)\frac{u(x_{j^*+1})-2u(x_{j^*})+u(x_{j^*-1})}{(\Delta x)^2}.
\end{align}
Also since $u(x_{j^*})$ is a strict maximum we know
\begin{equation}\label{TMaxP_3}
\frac{u(x_{j^*+k-1})-2u(x_{j^*})+u(x_{j^*-k+1})}{2(k-1)^2(\Delta x)^2}< 0,
\end{equation}
combined with \eqref{TMaxP}, this gives us
\begin{align}\label{TMaxP_4}
\mathcal{L}_{\delta,\Delta x}^{qnl}u(x_{j^*})&\leq\sum_{k=\frac{x_{j^*}}{\Delta x}+1}^r\frac{u(x_{j^*+k-1})-2u(x_{j^*})+u(x_{j^*-k+1})}{2(k-1)^2(\Delta x)^2}\cdot(k-1)\Delta x\int_{(k-1)\Delta x}^{k\Delta x}s\gamma_{\delta}(s)ds\nonumber\\
&-\sum_{k=\frac{x_{j^*}}{\Delta x}+1}^r\frac{u(x_{j^*+k-1})-u(x_{j^*-k+1})}{2(k-1)\Delta x}\int_{(k-1)\Delta x}^{k\Delta x}s\gamma_{\delta}(s)ds\nonumber\\
&+\bigg(\int_{x_{j^*}}^{\delta}s\gamma_{\delta}(s)ds\bigg)\frac{u(x_{j^*+1})-u(x_{j^*})}{\Delta x}\nonumber\\
&+{\bigg(\int_0^{x_j^*}s^2\gamma_{\delta}(s)ds+x_{j^*}\int_{x_{j^*}}^{\delta}s\gamma_{\delta}(s)ds\bigg)\frac{u(x_{j^*+1})-2u(x_{j^*})+u(x_{j^*-1})}{(\Delta x)^2}}.
\end{align}

\noindent
By simplifying we conclude
\begin{align}\label{TMaxP_5}
\mathcal{L}_{\delta,\Delta x}^{qnl}u_h(x_{j^*})&\leq\sum_{k=\frac{x_{j^*}}{\Delta x}+1}^r\frac{{-2u(x_{j^*})}+2u(x_{j^*-k+1})}{2(k-1)\Delta x}\int_{(k-1)\Delta x}^{k\Delta x}s\gamma_{\delta}(s)ds\nonumber\\
&+\bigg(\int_{x_{j^*}}^{\delta}s\gamma_{\delta}(s)ds\bigg)\frac{u(x_{j^*+1})-{u(x_{j^*})}}{\Delta x}\nonumber\\
&+\bigg(\int_0^{x_j^*}s^2\gamma_{\delta}(s)ds+x_{j^*}\int_{x_{j^*}}^{\delta}s\gamma_{\delta}(s)ds\bigg)\frac{u(x_{j^*+1})-{2u(x_{j^*})}+u(x_{j^*-1})}{(\Delta x)^2}<0. 
\end{align}
which contradicts $\displaystyle{-\mathcal{L}_{\delta,\Delta x}^{qnl}u(x_j)\leq0}$.

\vspace{.25cm}
\noindent
\textbf{Case III Local}: Consider $j^*\in\{N+r+1,...,2N-1\}$. Then since $u(x_{j^*})$ is a strict maximum
\begin{equation}\label{LMaxP}
\mathcal{L}_{\delta,\Delta x}^{qnl}u(x_{j^*})=\frac{u(x_{j^*+1})-2u(x_{j^*})+u(x_{j^*-1})}{(\Delta x)^2}< 0
\end{equation}
\noindent
which contradicts $-\mathcal{L}_{\delta,\Delta x}^{qnl}u(x_j)\leq0$.
\end{proof}

\noindent
Next, we will consider the time-dependent case.

\begin{theorem}\label{thm_DMP_dynamic}
\textbf{Discrete Maximum Principle for the dynamic case}
Suppose for $i\in I=I_\Omega \cup I_B$ and $n=0,1,...,N_{T}-1$ with $T=N_T\cdot\Delta t$ that $\{u_i^n\}$ solves the following discrete QNL diffusion equation. 
\begin{equation}\label{discLtNDiffEq}
    \begin{cases}
     \frac{u_i^{n+1}-u_i^n}{\Delta t}=\mathcal{L}^{qnl}_{\delta,\Delta x}u_i^n+f_i^n, \hspace{.2in} for\hspace{.1cm} i\in I_{\Omega},\hspace{.1cm} and \hspace{.1cm} N_T>n\ge 0,\\
     u_i^0=g_i^0, \hspace{.2in} for \hspace{.1cm} i\in I \hspace{.1cm} \text{(Initial  Condition)},\\
     u_i^n=q_i^n, \hspace{.2in} for \hspace{.1cm} i\in I_B, \hspace{.1cm} n\geq 0 \hspace{.1cm}\text{(Boundary   Condition)},\\
   \end{cases}
\end{equation}

\noindent
then $u_i^n$ satisfies the discrete maximum principle 
\begin{equation}\label{DMP_dynamic_eq}
u_i^n\leq \max\{g_i^0|_{i\in I},\quad q_i^n|_{i\in I_B,n\geq0}\}
\end{equation}
given that $f_i^n\leq 0
$
for all $i\in I_{\Omega}$, all $n\geq0$, and $\frac{\Delta t}{\Delta x^2}\leq\frac{1}{4}$.

\end{theorem}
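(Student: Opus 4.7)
The plan is to recast the one-step explicit Euler update
$u_i^{n+1} = u_i^n + \Delta t\, \mathcal{L}^{qnl}_{\delta,\Delta x} u_i^n + \Delta t\, f_i^n$
as a linear combination
$u_i^{n+1} = \sum_{k \in I} C_{ik}\, u_k^n + \Delta t\, f_i^n$
for each interior index $i \in I_\Omega$, and then to verify the convex-combination properties $C_{ik} \geq 0$ and $\sum_k C_{ik} = 1$. Once these two properties are in hand, the hypothesis $f_i^n \leq 0$ yields $u_i^{n+1} \leq \sum_k C_{ik}\, u_k^n \leq \max_k u_k^n$, and induction on the time index $n$ (with the boundary data $u_k^n = q_k^n$ used whenever $k \in I_B$, and the initial condition used at $n=0$) propagates this one-step bound to the global maximum on the right side of \eqref{DMP_dynamic_eq}.

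The identity $\sum_k C_{ik} = 1$ is the easy half: each of the three branches of $\mathcal{L}^{qnl}_{\delta,\Delta x}$ in \eqref{QNL_FDM} is assembled from divided differences that annihilate the constant function, so the direct substitution $u \equiv 1$ gives $\mathcal{L}^{qnl}_{\delta,\Delta x}(1) \equiv 0$. Consequently the off-diagonal coefficients of $\Delta t\,\mathcal{L}^{qnl}_{\delta,\Delta x}$ sum to the negative of its diagonal coefficient, and adding the identity contribution from $u_i^n$ leaves a total weight of exactly $1$.

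The technical heart of the proof is the nonnegativity $C_{ik} \geq 0$, which I would check region by region using \eqref{QNL_FDM}. For $x_i > \delta$ only $u_{i\pm 1}^n$ appear with coefficient $\Delta t/\Delta x^2$, and $u_i^n$ with coefficient $1 - 2\Delta t/\Delta x^2$, both nonnegative under the stated CFL. For $x_i < 0$ each neighbor $u_{i\pm j}^n$ carries the weight $\frac{\Delta t}{(j\Delta x)^2}\int_{(j-1)\Delta x}^{j\Delta x} s^2\gamma_\delta(s)\,ds \geq 0$, and the diagonal coefficient is controlled from below using the pointwise estimate $s^2 \leq (j\Delta x)^2$ on each subinterval together with the normalization $\int_{-\delta}^\delta s^2\gamma_\delta\,ds = 1$. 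The delicate case is the transitional region $x_i \in [0,\delta]$: one first telescopes the two summations in \eqref{QNL_FDM} into the one-sided expression $\sum_j \frac{1}{(j-1)\Delta x}\int_{(j-1)\Delta x}^{j\Delta x} s\gamma_\delta(s)\,ds\,(u_{i-j+1}^n - u_i^n)$, so that the off-diagonal coefficients of $u_{i-j+1}^n$, $u_{i-1}^n$, and $u_{i+1}^n$ are visibly nonnegative; the diagonal is then controlled by combining the bound $\int_0^{x_i} s^2\gamma_\delta\,ds + x_i\int_{x_i}^\delta s\gamma_\delta\,ds \leq 1/2$, the estimate $\int_{x_i}^\delta s\gamma_\delta\,ds \leq C/\delta$, and $\tfrac{j}{j-1}\leq 2$ for $j \geq 2$ on the nonlocal-like sum. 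Here all three discretization mechanisms --- nonlocal weighted sum, first-derivative upwind term, and centered second difference --- contribute simultaneously to the diagonal, and balancing them against the stated CFL $\Delta t/\Delta x^2 \leq 1/4$ is the tightest point of the estimate and what I expect to be the main obstacle of the proof.
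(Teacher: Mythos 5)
Your proposal is correct and takes essentially the same route as the paper: both recast the explicit Euler step as a convex combination of the previous time level, verify nonnegativity of all weights and unit row sum (the paper's coefficients $A,B_k,C_k,D,E$) under $\frac{\Delta t}{\Delta x^2}\le\frac14$, and then induct in $n$ using $f_i^n\le 0$ together with the initial and boundary data. Your telescoping of the two transitional sums into $\sum_j \frac{u_{i-j+1}^n-u_i^n}{(j-1)\Delta x}\int_{(j-1)\Delta x}^{j\Delta x}s\gamma_\delta(s)\,ds$ is only a cosmetic shortcut around the paper's separate check that $B_k\ge 0$ via the moment inequality, and the diagonal bound you flag as the main obstacle does close exactly with the ingredients you list (it is the paper's estimate $AA\le 4\frac{\Delta t}{\Delta x^2}\le 1$).
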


\begin{proof}
We denote $M=\max\{g_i^0|_{i\in I},\quad q_i^n|_{i\in I_B,n\geq0}\}$. Clearly, at $n=0$ we have $u_i^0\leq M$ for all $i\in I=I_\Omega \cup I_B$. We assume that this holds for $n=m$ with $0\leq m\leq N_T-2$. Now we would like to advance it to the next time level $n=m+1$.
\vspace{.1in}

\noindent
\textbf{Case I Nonlocal:} Consider $i\in \{1,2,...,N\}$ which is the nonlocal region. Then 
\begin{align*}
u_i^{m+1}&=u_i^m+\Delta t\bigg(\mathcal{L}^{qnl}_{\delta,\Delta x}u_i^m+f_i^m\bigg)\\
&\leq u_i^m+\Delta t \mathcal{L}^{qnl}_{\delta,\Delta x}u_i^m \nonumber\\
&= \bigg(1-\frac{2\Delta t}{\Delta x^2}\sum_{k=1}^r\frac{1}{k^2}\int_{(k-1)\Delta x}^{k\Delta x}s^2\gamma_{\delta}(s)ds\bigg)u_i^m + \frac{\Delta t}{\Delta x^2}\sum_{k=1}^r\frac{u_{i+k}^m+u_{i-k}^m}{k^2}\int_{(k-1)\Delta x}^{k\Delta x}s^2\gamma_{\delta}(s)ds.    
\end{align*}

\noindent
Notice that 
\begin{equation}\label{DMP_2}
\sum_{k=1}^r \frac{1}{k^2}\int_{(k-1)\Delta x}^{k\Delta x}s^2\gamma_{\delta}(s)ds\leq \sum_{k=1}^r\int_{(k-1)\Delta x}^{k\Delta x}s^2\gamma_{\delta}(s)ds=\int_0^{\delta}s^2\gamma_{\delta}(s)ds=1
\end{equation}
and $\displaystyle{\frac{\Delta t}{\Delta x^2}\leq \frac{1}{4}}$, so 
\begin{equation}\label{DMP_3}
\bigg(1-\frac{2\Delta t}{\Delta x^2}\sum_{k=1}^r\frac{1}{k^2}\int_{(k-1)\Delta x}^{k\Delta x}s^2\gamma_{\delta}(s)ds\bigg)\geq 0.
\end{equation}
Hence, 
\begin{align}\label{DMP_4}
u_i^{m+1}&\leq \bigg(1-\frac{2\Delta t}{\Delta x^2}\sum_{k=1}^r\frac{1}{k^2}\int_{(k-1)\Delta x}^{k\Delta x}s^2\gamma_{\delta}(s)ds\bigg)u_i^m+\frac{\Delta t}{\Delta x^2}\sum_{k=1}^r\frac{u_{i+k}^m+u_{i-k}^m}{k^2}\int_{(k-1)\Delta x}^{k\Delta x}s^2\gamma_{\delta}(s)ds \nonumber \\
&\leq \bigg(1-\frac{2\Delta t}{\Delta x^2}\sum_{k=1}^r\frac{1}{k^2}\int_{(k-1)\Delta x}^{k\Delta x}s^2\gamma_{\delta}(s)ds\bigg)M+\frac{\Delta t}{\Delta x^2}\sum_{k=1}^r\frac{M+M}{k^2}\int_{(k-1)\Delta x}^{k\Delta x}s^2\gamma_{\delta}(s)ds  \nonumber\\
&=M.
\end{align}

\noindent
\textbf{Case II Transitional:} Consider $i\in \{N+1,...,N+r\}$ which is the transitional region. Then 
\begin{align}\label{DMP_5}
u_i^{m+1}&\leq u_i^m+\Delta t \mathcal{L}^{qnl}_{\delta,\Delta x}u_i^m \nonumber \\
&=u_i^m+\Delta t\Bigg[\sum_{k=\frac{x_i}{\Delta x}+1}^r\frac{u_{i+k-1}^m-2u_i^m+u_{i-k+1}^m}{2(k-1)^2\Delta x^2}\int_{(k-1)\Delta x}^{k\Delta x}s^2\gamma_{\delta}(s)ds \nonumber \\
&-\sum_{k=\frac{x_i}{\Delta x}+1}^r\frac{u_{i+k-1}^m-u_{i-k+1}^m}{2(k-1)\Delta x}\int_{(k-1)\Delta x}^{k\Delta x}s\gamma_{\delta}(s)ds +\bigg(\int_{x_i}^{\delta}s\gamma_{\delta}(s)ds\bigg)\frac{u_{i+1}^m-u_i^m}{\Delta x} \nonumber \\
&+\bigg(\int_0^{x_i}s^2\gamma_{\delta}(s)ds+x_i\int_{x_i}^{\delta}s\gamma_{\delta}(s)ds\bigg)\frac{u_{i+1}^m-2u_i^m+u_{i-1}^m}{\Delta x^2}\Bigg] \nonumber \\
&=A\cdot u_i^m+\sum_{k=\frac{x_i}{\Delta x}+1}^r   \left(B_k\cdot u_{i+k-1}^m+C_k \cdot u_{i-k+1}^m+D\cdot u_{i+1}^m+E\cdot u_{i-1}^m \right)
\end{align}
where those notations are defined as
\begin{align}\label{DMP_6}
 A&=1+\frac{\Delta t}{\Delta x^2}\bigg(\sum_{k=\frac{x_i}{\Delta x}+1}^r\frac{-1}{(k-1)^2}\int_{(k-1)\Delta x}^{k\Delta x}s^2\gamma_{\delta}(s)ds\bigg)+\frac{\Delta t}{\Delta x}\bigg(-\int_{x_i}^{\delta}s\gamma_{\delta}(s)ds\bigg)\nonumber \\
 &-\frac{2\Delta t}{\Delta x^2}\bigg(\int_0^{x_i}s^2\gamma_{\delta}(s)ds+x_i\int_{x_i}^{\delta}s\gamma_{\delta}(s)ds\bigg),\nonumber \\
 B_k&=\frac{\Delta t}{2\Delta x^2(k-1)^2}\int_{(k-1)\Delta x}^{k\Delta x}s^2\gamma_{\delta}(s)ds-\frac{\Delta t}{2\Delta x(k-1)}\int_{(k-1)\Delta x}^{k\Delta x}s\gamma_{\delta}(s)ds, \nonumber \\
 C_k&=\frac{\Delta t}{2\Delta x^2(k-1)^2}\int_{(k-1)\Delta x}^{k\Delta x}s^2\gamma_{\delta}(s)ds+\frac{\Delta t}{2\Delta x(k-1)}\int_{(k-1)\Delta x}^{k\Delta x}s\gamma_{\delta}(s)ds, \nonumber \\
 D&=\frac{\Delta t}{\Delta x}\int_{x_i}^{\delta}s\gamma_{\delta}(s)ds+\frac{\Delta t}{\Delta x^2}\bigg(\int_0^{x_i}s^2\gamma_{\delta}(s)ds+x_i\int_{x_i}^{\delta}s\gamma_{\delta}(s)ds\bigg), and\nonumber \\
 E&=\frac{\Delta t}{\Delta x^2}\bigg(\int_0^{x_i}s^2\gamma_{\delta}(s)ds+x_i\int_{x_i}^{\delta}s\gamma_{\delta}(s)ds\bigg).
\end{align}

\noindent
Clearly, $\displaystyle{A+\sum_{k=\frac{x_i}{\Delta x}+1}^r(B_k+C_k)+D+E=1}$, and $B_k, C_k, D, E\geq0$ when $\Delta x$ is sufficiently small and 
because that $-\frac{\Delta t}{2\Delta x(k-1)}\int_{(k-1)\Delta x}^{k\Delta x}s\gamma_{\delta}(s)ds >
-\frac{\Delta t}{2(\Delta x)^2(k-1)^2}\int_{(k-1)\Delta x}^{k\Delta x}s^2\gamma_{\delta}(s)ds.
$

\vspace{0.1 in}
\noindent
Now we want to prove that $A\geq 0$. It is equivalent to prove 
\begin{align}\label{DMP_7}
AA&=\frac{\Delta t}{\Delta x^2}\Bigg[\sum_{k=\frac{x_i}{\Delta x}+1}^r\frac{1}{(k-1)^2}\int_{(k-1)\Delta x}^{k\Delta x}s^2\gamma_{\delta}(s)ds+2\bigg(\int_0^{x_i}s^2\gamma_{\delta}(s)ds+x_i\int_{x_i}^{\delta}s\gamma_{\delta}(s)ds\bigg) \nonumber\\
&+\Delta x\int_{x_i}^{\delta}s\gamma_{\delta}(s)ds\Bigg]\leq1. 
\end{align}

\noindent
Notice that 
\begin{align*}
AA&=\frac{\Delta t}{\Delta x^2}\Bigg[\sum_{k=\frac{x_i}{\Delta x}+1}^r\bigg(\frac{1}{(k-1)^2}\int_{(k-1)\Delta x}^{k\Delta x}s^2\gamma_{\delta}(s)ds+2x_i\int_{(k-1)\Delta x}^{k\Delta x}\bigg(\frac{1}{s}\bigg)s^2\gamma_{\delta}(s)ds\nonumber\\
&+\Delta x\int_{(k-1)\Delta x}^{k\Delta x}\bigg(\frac{1}{s}\bigg)s^2\gamma_{\delta}(s)ds\bigg)+2\int_0^{x_i}s^2\gamma_{\delta}(s)ds\Bigg]\nonumber\\
&\leq \frac{\Delta t}{\Delta x^2}\Bigg[\sum_{k=\frac{x_i}{\Delta x}+1}^r\bigg(\frac{1}{(k-1)^2}\int_{(k-1)\Delta x}^{k\Delta x}s^2\gamma_{\delta}(s)ds+\frac{2x_i}{(k-1)\Delta x}\int_{(k-1)\Delta x}^{k\Delta x}s^2\gamma_{\delta}(s)ds\nonumber\\
&+\frac{\Delta x}{(k-1)\Delta x}\int_{(k-1)\Delta x}^{k\Delta x}s^2\gamma_{\delta}(s)ds\bigg)+2\int_0^{x_i}s^2\gamma_{\delta}(s)ds\Bigg]\nonumber\\
&\leq\frac{\Delta t}{\Delta x^2}\Bigg[\sum_{k=\frac{x_i}{\Delta x}+1}^r4\int_{(k-1)\Delta x}^{k\Delta x}s^2\gamma_{\delta}(s)ds+4\int_0^{x_i}s^2\gamma_{\delta}(s)ds\Bigg]\nonumber\\
&=4\frac{\Delta t}{\Delta x^2}\Bigg[\sum_{k=\frac{x_i}{\Delta x}+1}^r\int_{(k-1)\Delta x}^{k\Delta x}s^2\gamma_{\delta}(s)ds+\int_0^{x_i}s^2\gamma_{\delta}(s)ds\Bigg]
=\frac{4\Delta t}{\Delta x^2}\int_0^{\delta}s^2\gamma_{\delta}(s)ds=4\frac{\Delta t}{\Delta x^2}.\leq 1 \hspace{.1cm}
\end{align*}
Since  $\frac{\Delta t}{\Delta x^2}\leq\frac{1}{4}$, so $AA\le 1$. Therefore, 

$A\geq0$ for $\displaystyle{B_k\geq\frac{\Delta t}{2\Delta x^2(k-1)^2}\int_{(k-1)\Delta x}^{k\Delta x}s^2\gamma_{\delta}(s)ds-\frac{\Delta t}{2\Delta x^2(k-1)^2}\int_{(k-1)\Delta x}^{k\Delta x}s^2\gamma_{\delta}(s)ds=0.}$

\vspace{.25in}
\noindent
Summarizing the coefficients of equation \eqref{DMP_5} gives

\begin{itemize}
    \item $A, B_k, C_k, D, E \geq0$
    \item $\displaystyle{A+\sum_{k=\frac{x_i}{\Delta x}+1}^r(B_k+C_k)+D+E=1}$.
\end{itemize}
\noindent
Hence $\displaystyle{u_i^{m+1}\leq \bigg(A+\sum_{k=\frac{x_i}{\Delta x}+1}^r(B_k+C_k)+D+E\bigg)M=M}.$

\vspace{.25cm}
\noindent
\textbf{Case III Local:} Consider $i\in \{N+r+1,...,2N-1\}$ which is the local region. Then

\begin{align*}
u_i^{m+1}&=u_i^m+\frac{\Delta t}{\Delta x^2}\bigg(u_{i+1}^m-2u_i^m+u_{i-1}^m\bigg)+\Delta t f_i^m\le\bigg(1-\frac{2\Delta t}{\Delta x^2}\bigg)u_i^m+\frac{\Delta t}{\Delta x^2}\bigg(u_{i+1}^m+u_{i-1}^m\bigg)
\end{align*}

\noindent
with $\frac{\Delta t}{\Delta x^2}\leq \frac{1}{4}$ which gives all positive coefficients, so $u_i^{m+1}\leq M$.

\vspace{.1in}
\noindent
Combining case I, II, III we can conclude that given $u_i^m\leq M$ for all $i\in I_{\Omega}$, and $\frac{\Delta t}{\Delta x^2}\leq \frac{1}{4}$ we have $u_i^{m+1}\leq M$ for all $i\in I_{\Omega}$. According to the induction we prove the theorem.

\end{proof}

\begin{corollary}
Suppose for $i\in I = I_{\Omega}\cup I_B$, $n=0,1,...,N_T-1,$ and $T=N_T\cdot \Delta t$ that $\{u_i^n\}$ solves the following discrete QNL diffusion equation \eqref{discLtNDiffEq} then we have the following upper bound for $u_i^n$ given that $\frac{\Delta t}{\Delta x^2}\leq \frac{1}{4}$,

\begin{equation}\label{DMP_10}
u_i^n\leq T\cdot ||f||_{\ell^{\infty}(I)}+max\{||g_i^0||_{\ell^{\infty}(I)},||q_i^n||_{\ell^{\infty}(I_B)}\}.
\end{equation}
\end{corollary}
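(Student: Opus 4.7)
The plan is to reduce the corollary to Theorem~\ref{thm_DMP_dynamic} by a linear-in-time shift that absorbs the forcing into the data. Let $F := \|f\|_{\ell^{\infty}(I)}$ and introduce the auxiliary sequence
\begin{equation}
v_i^n := u_i^n - n\Delta t\, F.
\end{equation}
The crucial ingredient is that $\mathcal{L}^{qnl}_{\delta,\Delta x}$ annihilates spatial constants. Inspecting each branch of \eqref{QNL_FDM}, every term is either a centered second difference in $u$, an antisymmetric first difference, or (in the transitional region) the one-sided difference $(u_{i+1}-u_i)/\Delta x$ paired with the centered Laplacian in such a way that constants cancel when the coefficients are summed. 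Since $n\Delta t\, F$ is constant in $i$, this yields $\mathcal{L}^{qnl}_{\delta,\Delta x}v_i^n = \mathcal{L}^{qnl}_{\delta,\Delta x}u_i^n$.

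Using this identity, subtracting $F$ from both sides of \eqref{discLtNDiffEq} shows that $v$ satisfies a discrete QNL diffusion equation of the same form,
\begin{equation}
\frac{v_i^{n+1}-v_i^n}{\Delta t} = \mathcal{L}^{qnl}_{\delta,\Delta x} v_i^n + \tilde f_i^n,\qquad i\in I_{\Omega},
\end{equation}
with the shifted source $\tilde f_i^n := f_i^n - F \le 0$ (nonpositive by the very definition of $F$), initial data $v_i^0 = g_i^0$ on $I$, and boundary data $\tilde q_i^n := q_i^n - n\Delta t\, F$ on $I_B$.

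Since the CFL condition $\Delta t/\Delta x^2 \le \tfrac{1}{4}$ is assumed, Theorem~\ref{thm_DMP_dynamic} now applies directly to $v$ and gives
\begin{equation}
v_i^n \le \max\bigl\{g_i^0|_{i\in I},\ \tilde q_i^n|_{i\in I_B,\,n\ge 0}\bigr\}\le \max\bigl\{\|g_i^0\|_{\ell^{\infty}(I)},\ \|q_i^n\|_{\ell^{\infty}(I_B)}\bigr\},
\end{equation}
where the second inequality uses $\tilde q_i^n \le q_i^n$ (as $n\Delta t\, F\ge 0$). Undoing the shift, $u_i^n = v_i^n + n\Delta t\, F$, and bounding $n\Delta t \le T$, produces the claimed estimate \eqref{DMP_10}.

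The argument is essentially routine once the constant-annihilation property of $\mathcal{L}^{qnl}_{\delta,\Delta x}$ is in hand, so the main (minor) obstacle is just checking that property in the transitional region, where the several nonstandard one-sided stencils in \eqref{QNL_FDM} must be verified to cancel against one another on constant inputs; the nonlocal and local branches are immediate.
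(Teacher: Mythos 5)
Your proposal is correct and follows essentially the same route as the paper, which uses the comparison function $w_i^n = u_i^n + (T - n\Delta t)\|f\|_{\ell^\infty(I)}$ — identical to your $v_i^n$ up to the immaterial additive constant $T\|f\|_{\ell^\infty(I)}$ — and then invokes Theorem~\ref{thm_DMP_dynamic} with the shifted nonpositive source. You are in fact slightly more careful than the paper in making explicit that $\mathcal{L}^{qnl}_{\delta,\Delta x}$ annihilates spatial constants (each branch of \eqref{QNL_FDM} is built from differences of values, so every term vanishes individually on constants), a fact the paper uses tacitly.
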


\begin{proof}
We introduce a comparison function

\begin{equation}\label{DMP_11}
w_i^n= u_i^n+(T-n\cdot \Delta t)||f||_{\ell^{\infty}(I)}\geq u_i^n    
\end{equation}

\noindent
for $i\in I$, and $n\geq0$. Then we have

\begin{equation*}\label{DMP_12}
\frac{w_i^{n+1}-w_i^n}{\Delta t}=\frac{u_i^{n+1}-u_i^n}{\Delta t}-||f||_{\ell^{\infty}(I)}=\mathcal{L}^{qnl}_{\delta,\Delta x}u_i^n+\bigg(f_i^n-||f||_{\ell^{\infty}(I)}\bigg) 
\end{equation*}

\noindent
where $\bigg(f_i^n-||f||_{\ell^{\infty}(I)}\bigg)\leq 0$. Therefore by Theorem~\ref{thm_DMP_dynamic}, $w_i^n$ satisfies the discrete maximum principle $w_i^n\leq \max\{w_i^0|_{i\in I},\;\, w_i^n|_{i\in I_B}\}$  for all $i\in I_{\Omega}$ and $n\geq0$, given that $\frac{\Delta t}{\Delta x^2}\leq\frac{1}{4}$.

\noindent
Notice that 
\begin{equation}\label{DMP_13}
w_i^0=u_i^0+T\cdot ||f||_{\ell^{\infty}(I)}\leq \max\{||g_i^0||_{\ell^{\infty}(I)},||q_i^n||_{\ell^{\infty}(I_B)}\}+T\cdot ||f||_{\ell^{\infty}(I)} 
\end{equation}
and also that 
\begin{equation}\label{DMP_14}
w_i^n|_{i\in I_B}=u_i^n|_{i\in I_B}+\bigg(T-n\cdot \Delta t\bigg)||f||_{\ell^{\infty}(I)}\leq \max\{||g_i^0||_{\ell^{\infty}(I)},||q_i^n||_{\ell^{\infty}(I_B)}\}+T\cdot ||f||_{\ell^{\infty}(I)}.
\end{equation}
combined with the fact that $u_i^n|_{i\in I}\leq w_i^n|_{i\in I}$ proves the corollary.\end{proof}
\begin{remark}
Although in the proof of stability analysis, we require that $\frac{\Delta t}{\Delta x^2}\le \frac{1}{4}$ to proceed the analysis; meanwhile, we notice in the simulation that with $\frac{\Delta t}{\Delta x^2}$ close to $\frac{1}{2}$, we still have stable numerical results.
\end{remark}

\section{Convergence}\label{sec:convergence}
In this section, we prove the convergence results of the proposed FDM scheme.
\begin{theorem}
\textbf{Global error estimate of the discrete solution} Suppose $u(x,t)$ is the strong solution to \eqref{QNL_Cont} and $u_i^n$ is the discrete solution to the scheme \eqref{QNL_scheme} with $i\in I, n=0,1,...,N_T-1,$ and $N_T\Delta t=T$, respectively. Then we have
\begin{equation}\label{Conv}
|u(x_i,t^n)-u_i^n|\leq T\cdot C_{\delta}(\Delta x^2+\Delta t)
\end{equation}
given that $\frac{\Delta t}{\Delta x^2}\leq \frac{1}{4}$.
\end{theorem}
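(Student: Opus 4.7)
The plan is the classical Lax-equivalence strategy: combine the per-step consistency estimate from Theorem~\ref{theorem:consistency} with the stability supplied by the discrete maximum principle (Theorem~\ref{thm_DMP_dynamic} together with its corollary) to convert the local truncation error into a global error bound. The error inherits its evolution equation from the difference of the continuous and discrete problems, and the maximum-principle-based corollary then propagates the truncation bound forward in time with only a linear-in-$T$ loss.

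First I would introduce the pointwise error $e_i^n := u(x_i,t^n)-u_i^n$ for $i\in I$ and $0\le n\le N_T$. By construction $e_i^0=0$ and $e_i^n=0$ for all $i\in I_B$, since the exact and the discrete solutions share the same initial and boundary data. Subtracting the discrete scheme \eqref{QNL_scheme} from the continuous QNL equation \eqref{QNL_Cont} evaluated at the grid point $(x_i,t^n)$, and using the Taylor expansions already produced in the proof of Theorem~\ref{theorem:consistency}, one obtains the error equation
\begin{equation*}
\frac{e_i^{n+1}-e_i^n}{\Delta t}=\mathcal{L}^{qnl}_{\delta,\Delta x}e_i^n+\tau_i^n,\qquad i\in I_\Omega,\ 0\le n\le N_T-1,
\end{equation*}
where the local truncation error $\tau_i^n$ inherits from \eqref{consistency_error}, after dividing by $\Delta t$, the uniform bound $\|\tau^n\|_{\ell^\infty(I_\Omega)}\le C_\delta(\Delta x^2+\Delta t)$.

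Next I would invoke stability. The error equation has exactly the form of the discrete QNL problem \eqref{discLtNDiffEq}, with homogeneous initial data, homogeneous boundary data, and source $\tau_i^n$. Running the comparison-function argument of the corollary of Theorem~\ref{thm_DMP_dynamic} with $w_i^n=e_i^n+(T-n\Delta t)\|\tau\|_{\ell^\infty}$ gives the upper bound $e_i^n\le T\cdot\|\tau\|_{\ell^\infty}$, provided the CFL restriction $\Delta t/\Delta x^2\le 1/4$ holds. Running the same argument on $-e_i^n$, that is with $\tilde w_i^n=-e_i^n+(T-n\Delta t)\|\tau\|_{\ell^\infty}$, yields the matching lower bound. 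Combining the two and inserting the consistency estimate produces
\begin{equation*}
|e_i^n|\le T\cdot\|\tau\|_{\ell^\infty(I_\Omega)}\le T\cdot C_\delta(\Delta x^2+\Delta t),
\end{equation*}
which is the claim.

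The only delicate point is that the corollary is stated for nonpositive forcing, whereas $\tau_i^n$ may have either sign; this is why the comparison-function argument has to be applied twice to control $e_i^n$ from above and below. Everything else is routine: the consistency and stability sections already contain all the analytic content, and the main obstacle is simply this careful two-sided bookkeeping together with checking that homogeneous initial and boundary data for $e_i^n$ eliminate the $\max\{\|g^0\|,\|q\|\}$ term in \eqref{DMP_10}.
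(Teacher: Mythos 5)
Your proposal is correct and follows essentially the same route as the paper: form the error equation from consistency, then apply the discrete maximum principle via a comparison function (your $e_i^n+(T-n\Delta t)\|\tau\|_{\ell^\infty}$ differs from the paper's $e_i^n-(n\Delta t)C_\delta(\Delta x^2+\Delta t)$ only by an additive constant), run twice for the two-sided bound. The only caveat, which you inherit from the theorem statement rather than introduce, is that dividing \eqref{consistency_error} by $\Delta t$ actually yields a truncation bound $C_\delta(\Delta x+\Delta t)$ rather than $C_\delta(\Delta x^2+\Delta t)$, owing to the first-order transitional-region error.
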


\begin{proof}
We define $e_i^n =u(x_i,t^n)-u_i^n$, $i=1,2,...,2N-1$, $n=0,1,...,N_T$ to be the error between the exact and discrete solutions. Then from the consistency analysis, and since $f_i^n=f(x_i,t^n)$ we have that 

\begin{equation}\label{Conv1}
    \begin{cases}
     \frac{e_i^{n+1}-e_i^n}{\Delta t}-\mathcal{L}_{\delta,\Delta x}^{qnl}e_i^n=\varepsilon_{c,i}, \hspace{.2in} for\hspace{.1cm} i\in I_{\Omega},\hspace{.1cm} and \hspace{.1cm} n\geq0\\
     e_i^0=0, i\in I\hspace{.2in}\text{(Initial Error)}\\
     e_i^n=0,\quad i\in I_{B} \hspace{.2in}\text{(Boundary \ Error)}\\
   \end{cases}
\end{equation}

\noindent
where $|\varepsilon_{c,i}|<C_\delta (\Delta x^2 + \Delta t)$ according to the consistency analysis. Hence we consider the following auxiliary function

\begin{equation}\label{Conv2}
w_i^n=e_i^n-(n\Delta t)\cdot C_\delta(\Delta x^2+\Delta t).
\end{equation}

\noindent
Observe that 
\begin{align}\label{Conv3}
\frac{w_i^{n+1}-w_i^n}{\Delta t}&-\mathcal{L}_{\delta,\Delta x}^{qnl}w_i^n \nonumber\\
&=\frac{{[e_i^{n+1}-C_\delta(\Delta x^2 +\Delta t)((n+1)\Delta t)]-[e_i^n-C_\delta(\Delta x^2+\Delta t)(n\Delta t)]}}{\Delta t}-\mathcal{L}_{\delta,\Delta x}^{qnl}e_i^n \nonumber\\ 
&= \frac{e^{n+1}_i-e_i^n}{\Delta t} -C_\delta(\Delta x ^2+\Delta t)-\mathcal{L}_{\delta,\Delta x}^{qnl}e_i^n\nonumber\\
&=\varepsilon_{c,i}-C_\delta(\Delta x ^2+\Delta t)\leq 0.
\end{align}
\noindent
Then $w_i^n$ satisfies
\begin{equation}\label{Conv4}
    \begin{cases}
     \frac{w_i^{n+1}-w_i^n}{\Delta t}-\mathcal{L}_{\delta,\Delta x}^{qnl}w_i^n\leq0, \quad i\in I_{\Omega},\\
     w_i^0=0, \quad i\in I,\hspace{.2in}\text{(Initial)},\\
     w_i^n=-(n\Delta t)\cdot C_\delta(\Delta x^2+\Delta t),\quad i\in I_{B}\hspace{.2in}\text{(Boundary)},\\
   \end{cases}
\end{equation}

\noindent
because of the the discrete maximum principle proved in Theorem~\ref{thm_DMP_dynamic}, so 
\begin{equation}\label{Conv5}
w_i^n\leq \max \{ w_i^0|{i\in I},\quad w_i^n|_{i\in I_B}\}=0, \quad \forall i\in I_{\Omega}.   
\end{equation}

\noindent
Therefore, $e_i^n\leq (n\Delta t)\cdot C_\delta(\Delta x^2+\Delta t)$. Similarly when $w_i^n=e_i^n+(n\Delta t)\cdot C_\delta(\Delta x^2+\Delta t)$ we have $e_i^n\geq -(n\Delta t)\cdot C_\delta(\Delta x^2+\Delta t)$. Hence, $|e_i^n|\leq (n\Delta t)\cdot C_\delta(\Delta x^2 +\Delta t)$ which gives $|u(x_i,t^n)-u_i^n|\leq T\cdot C_{\delta}(\Delta x^2+\Delta t)$.

\end{proof}

\section{Study of the 
Courant–Friedrichs–Lewy (CFL) condition}\label{sec:CFL}
In this section, we study the CFL condition of the new finite difference scheme by employing the Von Neumann stability analysis.
We denote $\frac{\Delta t}{\Delta x}$ by $\lambda_1$
and $\frac{\Delta t}{(\Delta x)^2}$ by $\lambda_2$
and insert $u_i^n=\left(g(\theta)\right)^n e^{\sqrt{-1}\theta  x_i}$  into the scheme \eqref{QNL_FDM} where $k$ is a given wave number. We get the following three different cases:
\begin{itemize}
\item \textbf{Case I Nonlocal}: for $x_i\leq0$, the growth factor is 
    \begin{equation}\label{growth_nonlocal}
        g(\theta)=1+\lambda_2\sum_{j=1}^{r} \frac{2\big(\cos(\theta j\Delta x)-1\big)}{j^2}\int_{(j-1)\Delta x}^{j\Delta x} s^2 \gamma_{\delta}(s)ds.
    \end{equation}
\item \textbf{Case II Transitional}: for $0<x_i\leq \delta$,
the growth factor is
\begin{equation}\label{growth_transition}
    \begin{split}
    g(\theta)=&1+\lambda_1\sum_{j=\frac{x_i}{\Delta x}+1}^{r}
    \frac{\big(\cos(\theta(j-1)\Delta x)-1\big)}{(j-1)}
    \int_{(j-1)\Delta x}^{j\Delta x}s\gamma_{\delta}(s)ds\\
    &-\lambda_1\sum_{j=\frac{x_i}{\Delta x}+1}^{r}
    \frac{\sqrt{-1}\sin (\theta(j-1)\Delta x)}{(j-1)}\int_{(j-1)\Delta x}^{j\Delta x} s\gamma_{\delta}(s)ds\\
    &+\lambda_1\left(\int_{x_i}^\delta s\gamma_\delta(s)ds\right)\left(\cos(\theta\Delta x)+\sqrt{-1}\sin(\theta\Delta x)-1\right)\\
    &+ \lambda_2\left(\int_0^{x_i}s^2\gamma_{\delta}(s)ds+
    x_i\int_{x_i}^{\delta} s\gamma_{\delta}(s)ds\right)\big(2\cos(\theta\Delta x)-2\big).
\end{split}
\end{equation}

\item \textbf{Case III Local}: for $x_i >\delta$, the growth factor is
\begin{equation}\label{growth_local}
    g(\theta)=1+\lambda_2\big(2\cos(\theta\Delta x)-2\big).
\end{equation}
\end{itemize}

\begin{proof}
Performing Von Nuemman analysis for stability we substitute $u_i^n=\left(g(\theta)\right)^n e^{\sqrt{-1}\theta x_i}$

\vspace{.25cm}
\noindent
\textbf{Case I}: 
\begin{equation}\label{CFL_1}
\frac{u_i^{n+1}-u_i^n}{\Delta t}=\sum_{j=1}^{r}\frac{u_{i+j}^n-2u_{i}^n+u_{i-j}^n}{(j\Delta x)^2}\int_{(j-1)\Delta x}^{j\Delta x}s^2\gamma_{\delta}(s)ds
\end{equation}
Substituting $u_i^n=\left(g(\theta)\right)^n e^{\sqrt{-1}\theta x_i}$ gives 
\begin{equation}\label{CFL_2}
g(\theta)^ne^{\sqrt{-1}\theta x_i}(g(\theta)-1)=\lambda_2\sum_{j=1}^{r}\frac{g(\theta)^ne^{\sqrt{-1}\theta x_i}\big(e^{\sqrt{-1}\theta\Delta x}-2+e^{-\sqrt{-1}\theta\Delta x}\big)}{j^2}\int_{(j-1)\Delta x}^{j\Delta x}s^2\gamma_{\delta}(s)ds.
\end{equation}
Therefore, we can conclude the growth factor for the nonlocal region is
\begin{equation}\label{CFL_3}
g(\theta)=1+\lambda_2\sum_{j=1}^{r}( \frac{2\big(\cos(\theta j\Delta x)-1\big)}{j^2}\int_{(j-1)\Delta x}^{j\Delta x} s^2 \gamma_{\delta}(s)ds.
\end{equation}

\noindent
\textbf{Case II}: 
\begin{align}\label{CFL_4}
\frac{u_i^{n+1}-u_i^n}{\Delta t}=&\sum_{j=\frac{x_i}{\Delta x}+1}^r\frac{u_{i+j-1}^n-2u_i^n+u_{i-j+1}^n}{2(j-1)\Delta x}\int_{(j-1)\Delta x}^{j\Delta x}s\gamma_{\delta}(s)ds\nonumber\\
&-\sum_{j=\frac{x_i}{\Delta x}+1}^r\frac{u_{i+j-1}^n-u_{i-j+1}^n}{2(j-1)\Delta x}\int_{(j-1)\Delta x}^{j\Delta x}s\gamma_{\delta}(s)ds\nonumber\\
&+\bigg(\int_{x_i}^{\delta}s\gamma_{\delta}(s)ds\bigg)\frac{u_{i+1}^n-u_i^n}{\Delta x}\nonumber\\
&+\bigg(\int_0^{x_i}s^2\gamma_{\delta}(s)ds+x_i\int_{x_i}^{\delta}s\gamma_{\delta}(s)ds\bigg)\frac{u_{i+1}^n-2u_i^n+u_{i-1}}{(\Delta x)^2}. 
\end{align}

\noindent
Similarly to the nonlocal region substituting $u_i^n=\left(g(\theta)\right)^n e^{\sqrt{-1}\theta x_i}$ gives
\begin{align}\label{CFL_5}
g(\theta)^n&e^{\sqrt{-1}\theta x_i}(g(\theta)-1)=\nonumber\\
&\lambda_1\sum_{j=\frac{x_i}{\Delta x}+1}^r\frac{1}{2(j-1)}\bigg(g(\theta)^ne^{\sqrt{-1}\theta x_i}\big(e^{\sqrt{-1}\theta(j-1)\Delta x}-2+e^{-\sqrt{-1}\theta(j-1)\Delta x}\big)\bigg)\int_{(j-1)\Delta x}^{j\Delta x}s\gamma_{\delta}(s)ds\nonumber\\
&-\lambda_1\sum_{j=\frac{x_i}{\Delta x}+1}^r\frac{1}{2(j-1)}\bigg(g(\theta)^ne^{\sqrt{-1}\theta x_i}\big(e^{\sqrt{-1}\theta(j-1)\Delta x}-e^{-\sqrt{-1}\theta(j-1)\Delta x}\big)\bigg)\int_{(j-1)\Delta x}^{j\Delta x}s\gamma_{\delta}(s)ds\nonumber\\
&+\lambda_1\bigg(\int_{x_i}^{\delta}s\gamma_{\delta}(s)ds\bigg)\bigg(g(\theta)^ne^{\sqrt{-1}\theta x_i}\big(e^{\sqrt{-1}k\Delta x}-1\big)\bigg)\nonumber\\
&+\lambda_2\bigg(\int_0^{x_i}s^2\gamma_{\delta}(s)ds+x_i\int_{x_i}^{\delta}s\gamma_{\delta}(s)ds\bigg)\bigg(g(\theta)^ne^{\sqrt{-1}\theta x_i}\big(e^{\sqrt{-1}\theta \Delta x}-2+e^{-\sqrt{-1}\theta\Delta x}\big)\bigg).
\end{align}

\noindent
Therefore, we can conclude the growth factor for the transitional region is
\begin{align}\label{CFL_6}
g(\theta)=&1+\lambda_1\sum_{j=\frac{x_i}{\Delta x}+1}^{r}
\frac{\big(\cos(\theta(j-1)\Delta x)-1\big)}{(j-1)}
\int_{(j-1)\Delta x}^{j\Delta x}s\gamma_{\delta}(s)ds\nonumber\\
&-\lambda_1\sum_{j=\frac{x_i}{\Delta x}+1}^{r}
\frac{\sqrt{-1}\sin (\theta(j-1)\Delta x)}{(j-1)}\int_{(j-1)\Delta x}^{j\Delta x} s\gamma_{\delta}(s)ds\nonumber\\
&+\lambda_1\left(\int_{x_i}^\delta s\gamma_\delta(s)ds\right)\left(\cos(\theta\Delta x)+\sqrt{-1}\sin(k\Delta x)-1\right)\nonumber\\
&+ \lambda_2\left(\int_0^{x_i}s^2\gamma_{\delta}(s)ds+
x_i\int_{x_i}^{\delta} s\gamma_{\delta}(s)ds\right)\big(2\cos(\theta\Delta x)-2\big).
\end{align}

\noindent
\textbf{Case III}: 
\begin{equation}\label{CFL_7}
\frac{u_i^{n+1}-u_i^n}{\Delta t}=\frac{u_{i+1}^n-2u_{i}^n+u_{i-1}^n}{(\Delta x)^2}
\end{equation}

\noindent
Finally, substituting $u_i^n=\left(g(\theta)\right)^n e^{\sqrt{-1}\theta x_i}$ gives
\begin{equation}\label{CFL_8}
g(\theta)^ne^{\sqrt{-1}\theta x_i}(g(\theta)-1)=\lambda_2\bigg(g(\theta)^ne^{\sqrt{-1}\theta x_i}\big(e^{\sqrt{-1}\theta\Delta x}-2+e^{-\sqrt{-1}k\Delta x}\big)\bigg).
\end{equation}

\noindent
Therefore, we can conclude the growth factor for the local region is
\begin{equation}\label{CFL_9}
g(\theta)=1+\lambda_2\big(2\cos(\theta\Delta x)-2\big).
\end{equation}

\noindent
Clearly, we have $\lambda_2=\Delta x \lambda_1$, so once we get the CFL constraint on $\lambda_1$, the CFL condition for $\lambda_2$ will be satisfied when $\Delta x$ is sufficiently small.
Because it is very difficult to analytically find this upper bound we implement the growth factor $g(\theta)$ numerically to identify restrictions on $\lambda_1$ and $\lambda_2$ to ensure $|g(\theta)|\le 1$. 
\end{proof}

\begin{figure}[htp!]
\centering
{\includegraphics[width=0.45\textwidth]{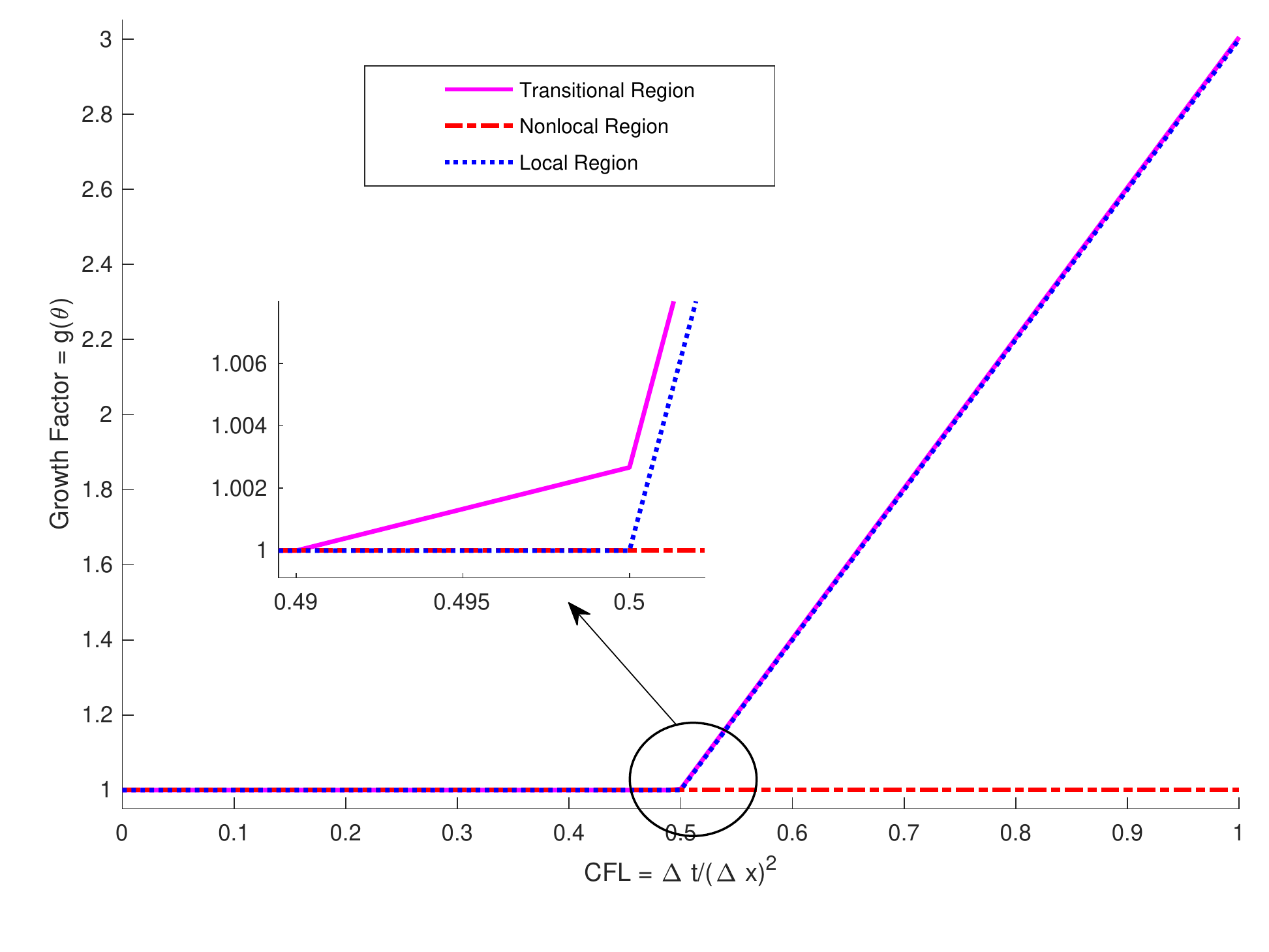}}
\label{fig:f8}
\caption{Maximum Growth Rate of \eqref{growth_nonlocal}, \eqref{growth_transition}, \eqref{growth_local}.\label{Fig:CFL}}
\end{figure}

\noindent
For linear local diffusion models with the explicit Euler and middle point finite difference discretization, the CFL is restricted by CFL $=\frac{\Delta t}{\Delta x^2}\le 0.5$. This provides the largest step size in time to reduce computational cost while preserves stability. By numerically analyzing the growth factor in Figure~\ref{Fig:CFL}, we found that the nonlocal and local regions match the typical restrictions for stability, but the transitional region is slightly less than 0.5. This factor needs to be considered for stability restrictions to the CFL on the whole coupling system. 

\section{Numerical Examples}\label{sec:numerics}
In this section, we test several numerical examples to confirm the stability and convergence results.

We fix the nonlocal diffusion kernel to be constant kernel
\[
\gamma_{\delta}(s)=\frac{3}{\delta^3}\chi_{[-\delta, \,\delta]}(s).
\]

\begin{enumerate}
\item For the first example, we consider the
asymptotic compatibility (AC) of the discretized operator $\mathcal{L}^{qnl}_{\delta, \Delta x}$
to the local diffusion problem as the horizon $\delta$ and spatial discretization $\Delta x$ go to zero at the same time.

We consider the external force $f$ as
\begin{equation}\label{Ex_1}
f(x, t) = 30x^4e^{-t} + e^{-t}(x^6 - 1) + 2.
\end{equation} 
Then, the exact solution to the local diffusion 
${u^{\ell}}_t=u^{\ell}_{xx}+f$ with $u^{\ell}(-1,t)=u^{\ell}(1,t)=0$ 
and $u^{\ell}(x,0)=(1-x^2)-(x^6-1)$
is
\begin{equation}\label{Ex_2}
u^{\ell}(x, t) = (1 - x^2) - e^{-t}(x^6 - 1).
\end{equation}

To test the AC convergence, we fix $\delta=r\Delta$ with $r=3$ and set the CFL to be $CFL=0.45$, that is $\Delta t=0.2 \Delta x$, and the termination time is chosen to be $T=1$. 

\noindent
First order convergence with respect to $\Delta x$ is observed. The convergence order and $L^{\infty}_{\Omega\times [0, T]}$ differences between $u^{\ell}(x,t)$ and discrete solution of $u^{qnl}_{\delta,\Delta x}$ are listed in Table \ref{table:1}. Also the visual comparison of the two solutions at $t=0$ and $t=T$ are displayed in Figure \ref{fig:2} with a nice agreement.

\begin{table}[H]
\centering
\begin{tabular}{|c|c|c|}
\hline
$\Delta x$ & $||u^{\ell}(x_i,t^n)-u_{\delta,\Delta x}^{qnl}(x_i,t^n)||_{L^{\infty}_{\Omega\times [0, T]}}$ & Order \\ \hline
$\frac{1}{50}$ & $0.1422$ & $-$ \\ \hline
$\frac{1}{100}$ & $7.168$e$-2$ & $0.988$ \\ \hline
$\frac{1}{200}$ & $3.614$e$-2$ & $0.988$\\ \hline
$\frac{1}{400}$ & $1.820
$e$-2$ & $0.990$\\ \hline
$\frac{1}{800}$ & $9.151
$e$-3$ & $0.992$\\ \hline
$\frac{1}{1600}$ & $4.594
$e$-3$ & $0.994$\\ \hline
\end{tabular}
\caption{$L^{\infty}_{\Omega\times [0, T]}$ differences between the local continuous solution $u^{\ell}$ and discrete solution $u_{\delta, \Delta x}^{qnl}$. We fix $\delta = 3\Delta x$, and the kernel is $\gamma_{\delta}(s)=\frac{3}{\delta^3}
\chi_{[-\delta,\delta]}(s)$. The termination time $T=1$ and $\Delta t=0.2 \Delta x$.}
\label{table:1}
\end{table}

\begin{figure}[H]
\centering
\subfigure[solutions at $t=0$]{{\includegraphics[width=0.42\textwidth]{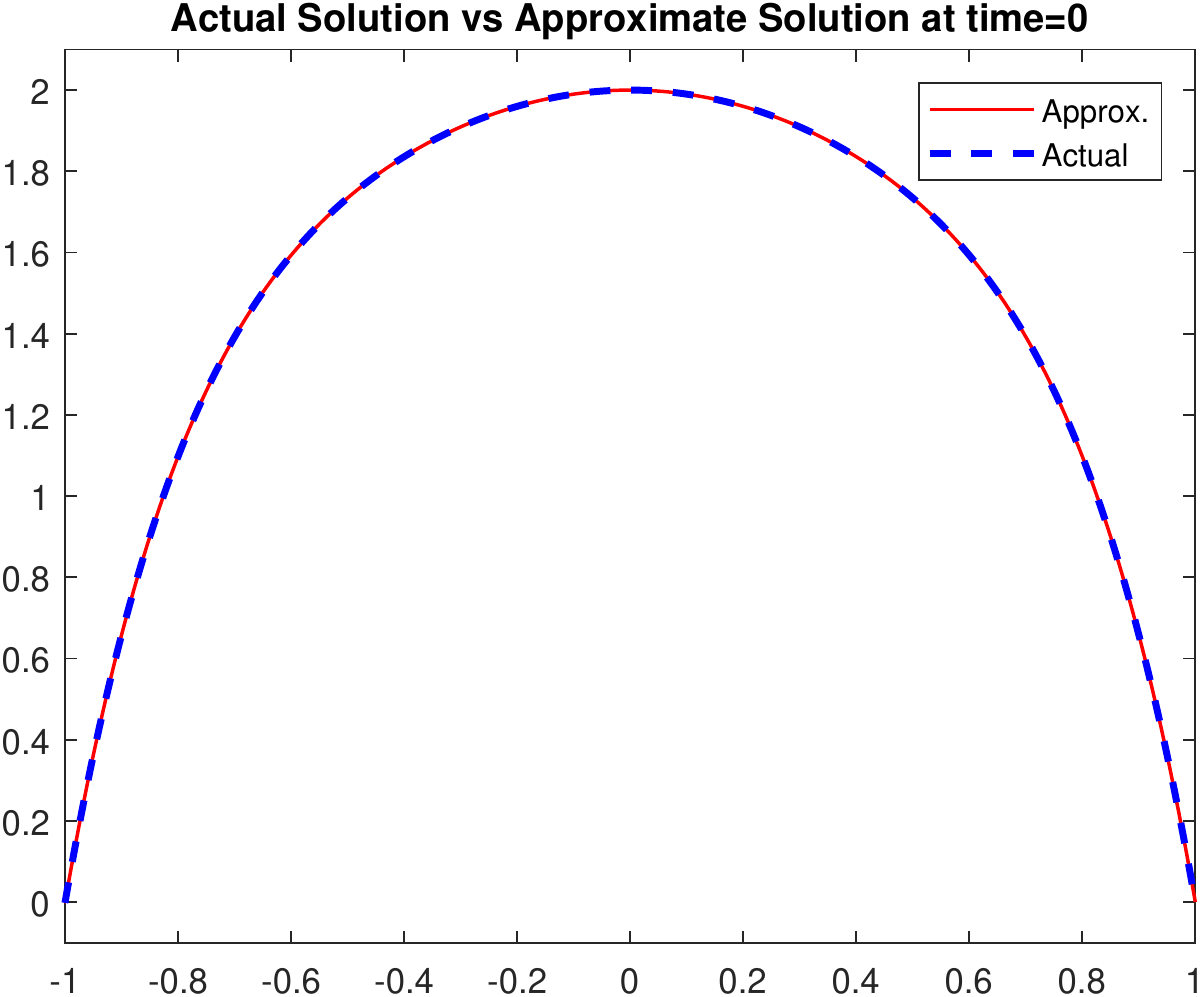}}}
\subfigure[solutions at $t=1$]{{\includegraphics[width=0.4\textwidth]{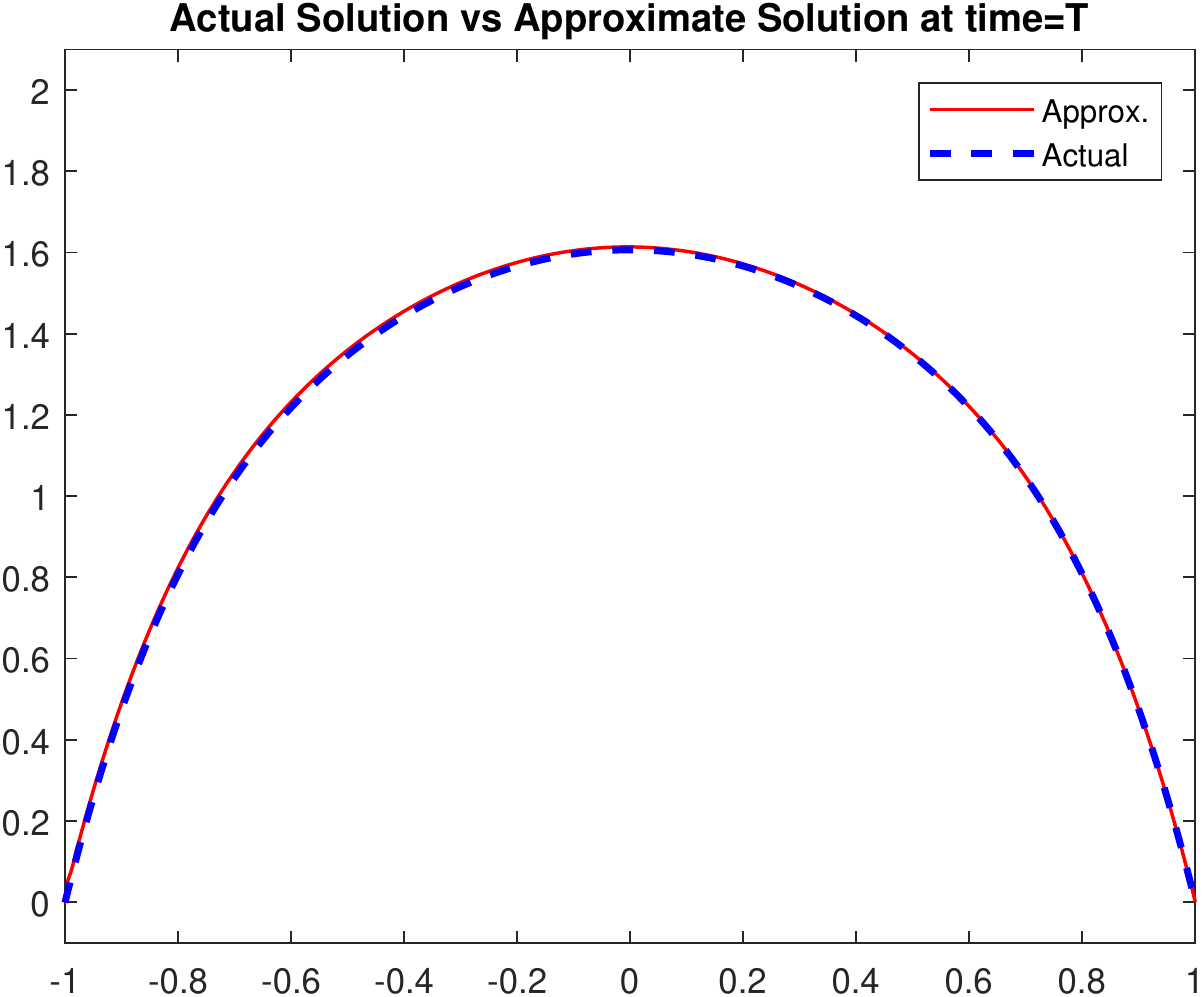}}}
\caption{Plots of solutions to the approximate and actual solutions. The kernel function was chosen as $\gamma_{\delta}(s)=\frac{3}{\delta^3}\chi_{[-\delta, \delta]}(s)$. The coupling inference is at $x^*=0$, and the mesh size is $\Delta x=\frac{1}{400}$ with a horizon as $\delta =\frac{3}{400}$, the temporal step size is $\Delta t=0.45\Delta x$.}
\label{fig:2}
\end{figure}

\item 
In the following example, we compare 
the original scheme  $\mathcal{\tilde{L}}_\delta^{qnl}$proposed in \cite{du2018quasinonlocal} with the new proposed scheme $\mathcal{L}_{\delta,\Delta x}^{qnl}$ in \eqref{QNL_FDM}. In \cite{du2018quasinonlocal}, the time-integral is still approximated by the explicit Euler method, and the  $\mathcal{\tilde{L}}_{\delta, \Delta x}^{qnl}$ is approximated by the following finite difference scheme given interface at $x^*=0$:
\begin{equation}\label{QNL_FDM2}
\mathcal{\tilde{L}}_{\delta,\Delta x}^{qnl}u_i^n\approx \begin{cases}
      \displaystyle{2\sum_{j=1}^r\frac{u_{i+j}^n-2u_i^n+u_{i-j}^n}{(j\Delta x)^2}\int_{(j-1)\Delta x}^{j\Delta x}s^2\gamma_{\delta}(s)ds}, \hspace{.25in} \text{if} \hspace{.1in} x_i<0.\\
      \\
      \displaystyle{\sum_{j=\frac{x_{i}}{\Delta x}}^{r}\frac{u_{i+j}^n-2u_{i}^n+u_{i-j}^n}{({j}\Delta x)^2}\int_{(j-1)\Delta x}^{j\Delta x}s^2\gamma_{\delta}(s)ds}\\
      \displaystyle{-\sum_{j=\frac{x_{i}}{\Delta x}}^{r}\frac{u_{i+j}^n-u_{i-j}^n}{{j}\Delta x}\int_{(j-1)\Delta x}^{j\Delta x}s\gamma_{\delta}(s)ds}\\
      \displaystyle{+2\bigg{(}\int_{x_{i}}^{\delta}s\gamma_{\delta}(s)ds\bigg{)}\frac{u_{i+1}^n-u_{i}^n}{\Delta x}}\\
      \displaystyle{+\bigg{(}2\int_{0}^{x_{i}}s^2\gamma_{\delta}(s)ds+2x_{i}\int_{x_{i}}^{\delta}s\gamma_{\delta}(s)ds\bigg{)}\frac{u_{i+1}^n-2u_{i}^n+u_{i-1}^n}{(\Delta x)^2}},\hspace{.2in}\text{if} \hspace{.1in} x_i\in[0,\delta],\\
      \\
      \displaystyle{\frac{u_{i+1}^n-2u_{i}^n+u_{i-1}^n}{(\Delta x)^2}}, \hspace{1.55in}\text{if}\hspace{.1in} x_i\in(\delta,1).\\
   \end{cases}
\end{equation}
Compare \eqref{QNL_FDM} with \eqref{QNL_FDM2}, we notice that the difference is replacing $j$ in the original scheme by $(j-1)$ in the new scheme. This is the main difference in the approximation that allows the equation \eqref{QNL_FDM}  to satisfy the discrete maximum principle where equation \eqref{QNL_FDM2} does not. 

Next, we are going to compare the AC convergence between \eqref{QNL_FDM}
and \eqref{QNL_FDM2}.
The exact local continuous solution is chosen to be
\begin{equation}\label{EX_3}
u^{\ell}(x, t)= e^{-t}(1-x)^2(1+x)^2x^2  
\end{equation}
and the corresponding external force is
\begin{equation}\label{EX_4}
\begin{split}
f(x,t) = &u^{\ell}_t-u^{\ell}_{xx}\\
=&-e^{-t}\left((x-x^3)^2+(2-24x^2+30x^4)\right).
\end{split}
\end{equation}

\noindent
 Again the kernel used is $\gamma_{\delta}(s) = \frac{3}{\delta^3}$ with $\delta =3\Delta x$. 
 We denote the solution obtained by $\mathcal{L}^{qnl}_{\delta, \Delta x}$ by $u^{qnl}_{\delta, \Delta x}$
 and the solution obtained by $\mathcal{\tilde{L}}^{qnl}_{\delta, \Delta x}$ by ${\tilde{u}}^{qnl}_{\delta, \Delta x}$.
 
 First order AC convergence with respect to $\Delta x$ are observed in Table~\ref{Table2} for both schemes \eqref{QNL_FDM} and \eqref{QNL_FDM2}, respectively. The approximation using scheme \eqref{QNL_FDM} at larger step size has second order convergence rate, and at smaller step size tends to be of first order.  
\begin{table}[H]
\centering
\begin{tabular}{|c|c|c|c|c|}
\hline
$\Delta x$ & $||{u}^{\ell}(x_i,t^n)-{\tilde{u}}_{\delta,\Delta x}^{qnl}(x_i,t^n)||_{L^{\infty}}$ & Order & $||{u}^{\ell}(x_i,t^n)-{{u}}_{\delta,\Delta x}^{qnl}(x_i,t^n)||_{L^{\infty}}$ & Order \\ \hline
$\frac{1}{50}$ & $9.255$e$-3$ & $-$ & $7.200$e$-3$ & $-$\\ \hline
$\frac{1}{100}$ & $4.692$e$-3$ & 0.980& $1.698$e$-3$ & 2.08 \\ \hline
$\frac{1}{200}$ & $2.356$e$-3$ & 0.994 & $4.121$e$-4$ & 1.09 \\ \hline
$\frac{1}{400}$ & 1.179e$-3$ & 0.998  & $1.931$e$-4$ & 1.09  \\ \hline
$\frac{1}{800}$ & 5.900e$-4$ & 0.999
&$9.628$e$-5$& 1.00
\\ \hline
$\frac{1}{1600}$ & 2.951e$-4$ & 1.00& 4.806e$-5$& $1.00$\\ \hline
\end{tabular}
\caption{$L^{\infty}_{\Omega\times [0,T]}$ differences between the local continuous solution $u^{\ell}$ and two discrete solutions $u_{\delta,\Delta x}^{qnl}$, ${\tilde{u}}_{\delta,\Delta x}^{qnl}$ using the FDM schemes \eqref{QNL_FDM} and \eqref{QNL_FDM2}, respectively. We fix $\delta = 3\Delta x$, and the kernel is $\gamma_{\delta}(s)=\frac{3}{\delta^3}$. The termination time is $T=1$ and $\Delta t=0.2 \Delta x$.\label{Table2}}
\end{table}

Next, we  compare the three solutions obtained from (1) new scheme; (2) exact local continuous solution and (3) the original scheme visually in Figure \ref{fig:3}. Notice that the exact local continuous solution $u^{\ell}(x,t)$ should remain non-negative throughout the entire computational domain $\Omega\times [0,T]$, however, both $u^{qnl}_{\delta, \Delta x}$ and 
${\tilde{u}}^{qnl}_{\delta, \Delta x}$ become slightly negative around the interface $x^{*}=0$. This does not contract the discrete maximum principle of $\mathcal{L}^{qnl}_{\delta, \Delta x}$ as the external force $f(x,t)$ defined in \eqref{EX_4} does not retain negative on $[-1, 1]$ as required in the assumption of Theorem~\ref{thm_DMP_dynamic}. On the other hand, because $\mathcal{L}^{qnl}_{\delta, \Delta x}$ satisfies the discrete maximum principle, consequently, $u^{qnl}_{\delta, \Delta x}$ provides less artificial negativity than ${\tilde{u}}^{qnl}_{\delta, \Delta x}$ around the interface of coupling.

\begin{figure}[h]
\centering
\subfigure[solutions at $t=0$]{\includegraphics[width=0.46\textwidth]{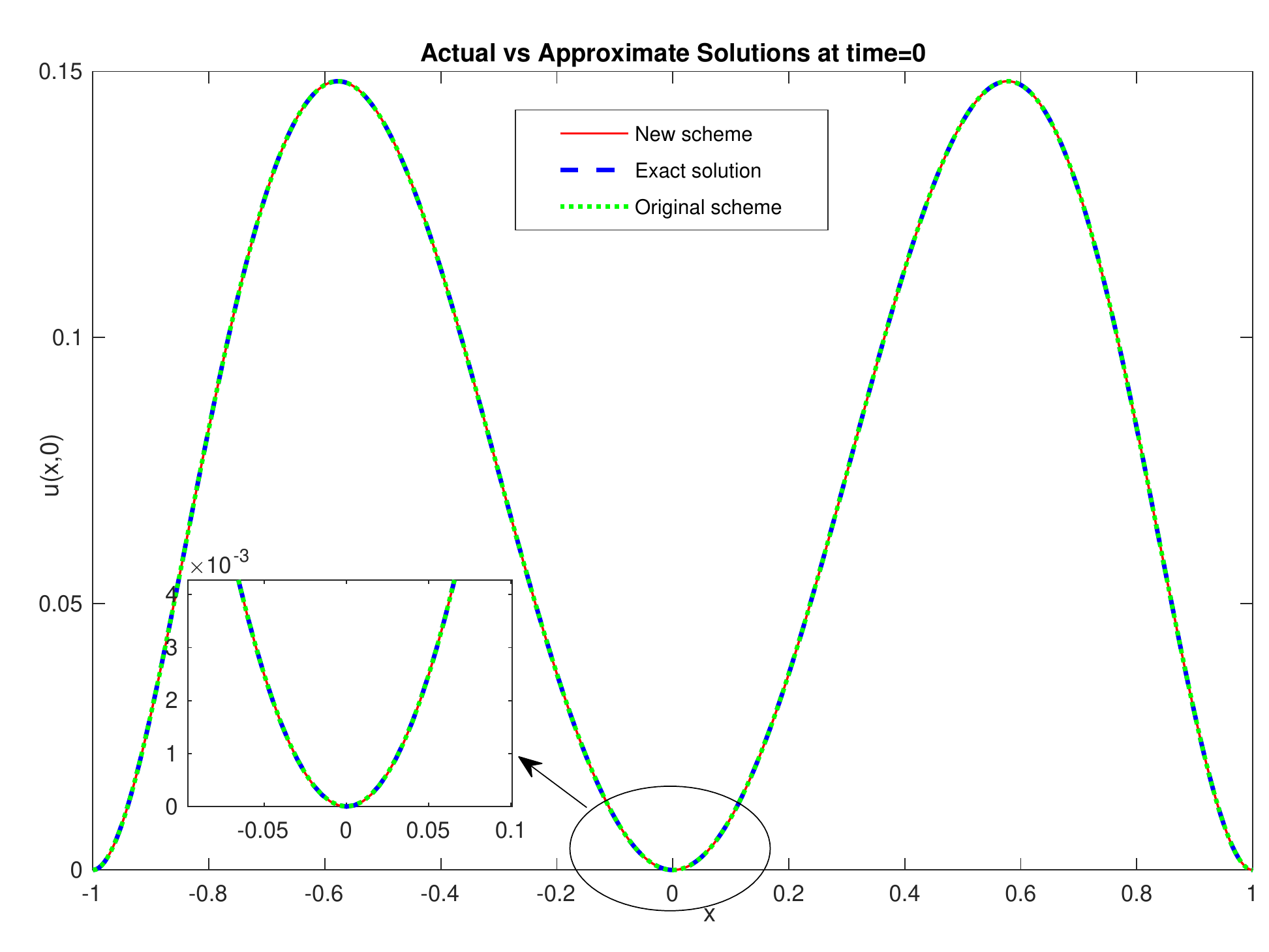}}
\subfigure[solutions at $t=1$]{\includegraphics[width=0.45\textwidth]{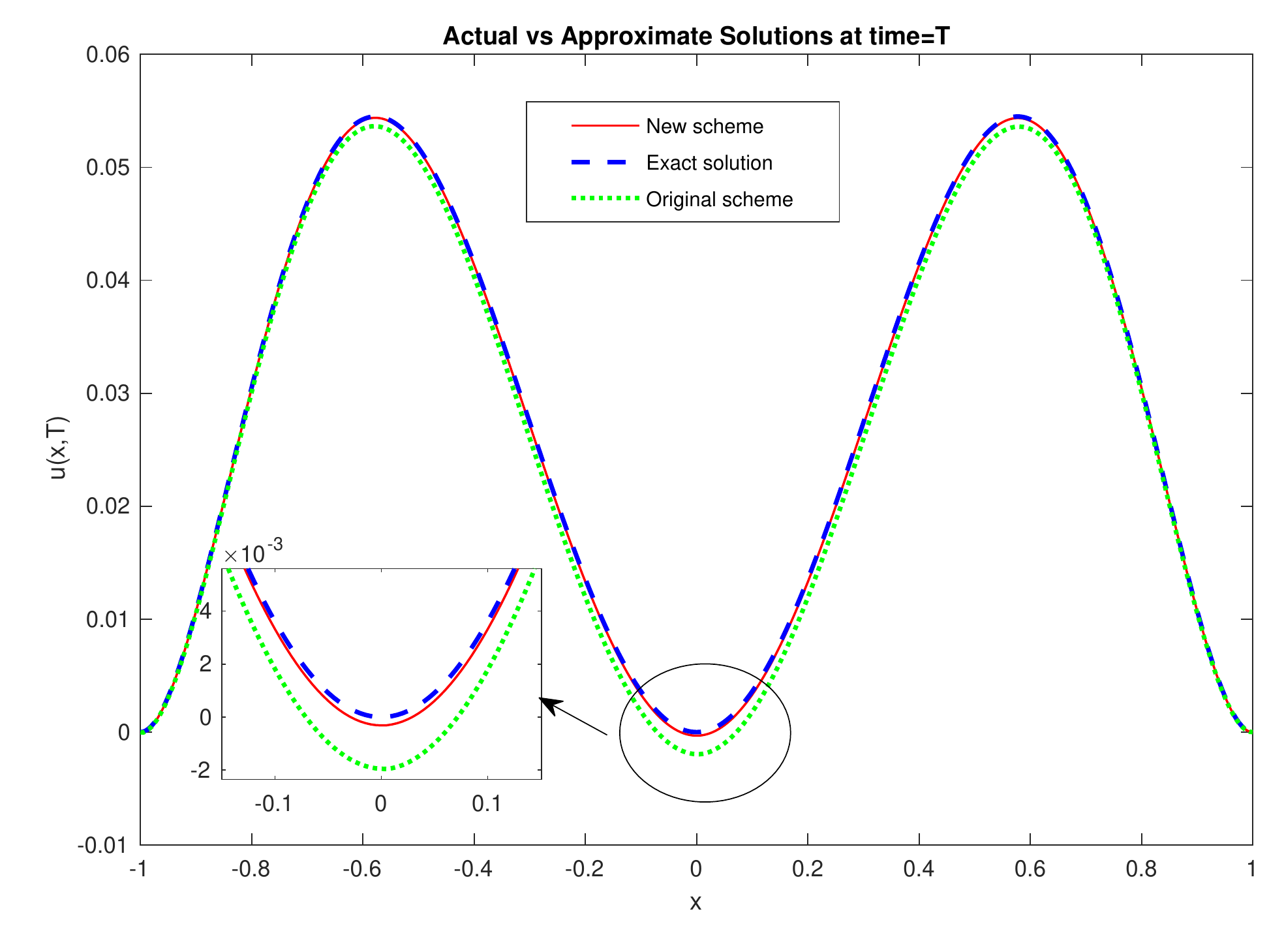} } 
\caption{Numerical comparison between the new scheme \eqref{QNL_FDM} and original scheme \eqref{QNL_FDM2} used to approximate \eqref{EX_3} with external force given by \eqref{EX_4}. The spatial step size is $\Delta x=\frac{1}{200}$ and $\Delta t$=$0.25\Delta x$. } 
\label{fig:3}
\end{figure}


\end{enumerate}
\section{Conclusion}
We propose a new scheme to discretize the quasi-nonlocal (QNL) coupling operator introduced in \cite{du2018quasinonlocal} for the nonlocal-to-local diffusion problem.
This new finite difference approximation preserves the properties of continuous equation on a discrete level. Consistency, stability, the maximum principle and the global convergence analysis of the scheme are proved rigorously. We analytically find the CFL conditions through the Von Neumann stability analysis and numerically calculate the CFL values for a given spatial discretization. The numerical calculations of the CFL provide us addition alert around the interface when considering the temporal step size for an explicit time integrator, as the CFL restrictions on the transitional region was discovered to be slightly less than $\frac{1}{2}$ with explicit Euler method employed in a diffusion problem. Multiple numerical examples are then provided and summarized to verify the theoretical findings. A comparison with the original scheme used in \cite{du2018quasinonlocal} is also provided which confirmed the improvements of the new scheme. 


\section{Acknowledgements}
 Amanda Gute and Dr. X. Li are supported by  NSF CAREER award: DMS-1847770 and the University of North Carolina at Charlotte Faculty Research Grant.

\bibliographystyle{abbrv}
\bibliography{QNLcouple}
\end{document}